\documentclass[letterpaper]{amsart}
\usepackage[OT1]{fontenc}

\usepackage[foot]{amsaddr}

\usepackage{cite}
\usepackage{amsfonts}
\usepackage{amssymb}
\usepackage{tikz}
\usetikzlibrary{calc}
\usepackage{forest}
\usepackage{enumitem}
\usepackage{xcolor}
\usepackage{graphicx}
\usepackage[hidelinks]{hyperref}

\theoremstyle{plain}
\newtheorem{theorem}{Theorem}
\newtheorem{proposition}[theorem]{Proposition}
\newtheorem{lemma}[theorem]{Lemma}
\newtheorem{corollary}[theorem]{Corollary}

\theoremstyle{definition}

\newtheorem{example}[theorem]{Example}

\theoremstyle{remark}
\newtheorem*{remark}{Remark}

\newcommand{\dto}{\stackrel{d}{\longrightarrow}}
\newcommand{\floor}[1]{\lfloor #1 \rfloor}
\newcommand{\Di}{\,\mathrm{d}}
\newcommand{\IP}{\mathbb{P}}
\newcommand{\Poi}{\mathsf{Poi}}
\newcommand{\NB}{\mathsf{NB}}

\DeclareMathOperator{\PF}{PF}
\DeclareMathOperator{\PPF}{PPF}

\title[Cycles in Multiset Permutations and Parking Functions]{On Cycles in Multiset Permutations, Parking Functions, and Related Structures}

\author[C. Buchanan]{Calum {Buchanan}$^*$}
\address{$^*$Department of Mathematics, University of Denver, Denver, CO 80208, USA}
\email{calum.buchanan@du.edu}

\author[F. Burghart]{Fabian {Burghart}$^\dag$}
\address{$^\dag$Institute of Discrete Mathematics, TU Graz, Steyrergasse 30, 8010 Graz, Austria}
\email{fabian.burghart@tugraz.at}

\author[S. Wagner]{Stephan {Wagner}$^{\dag, \ddag}$}
\address{$^\ddag$Department of Mathematics, Uppsala University, Box 480, 751 06 Uppsala, Sweden}
\email{stephan.wagner@tugraz.at}

\author[M. Yin]{Mei {Yin}$^*$}
\email{mei.yin@du.edu}

\thanks{{\em Key words and phrases}: parking function, multiset permutation, cycle type, cyclic point, terminal closer, equivalence of ensembles}

\thanks{{\em Funding}: F.\;Burghart is supported by the Austrian Science Fund (FWF) [10.55776/F1002]. S.\;Wagner is supported by the Swedish research council (VR), grant 2022-04030. M.\;Yin is supported by the Simons Travel Support for Mathematicians Grant 00007227.}

\thanks{This manuscript has been accepted for publication in the proceedings of the 37th International Conference on Probabilistic, Combinatorial and Asymptotic Methods for the Analysis of Algorithms (AofA 2026).}

\begin{document}

\begin{abstract}
In this paper we study cycles in multiset permutations and parking functions. As combinatorial objects, multiset permutations are essential building blocks for mappings and permutations, while parking functions lie between mappings and permutations. We take both algebraic and analytic views in our investigation and present exact as well as asymptotic results. We point to a surprising correspondence between two statistics on multiset permutations, terminal closers and cyclic points, shedding light on the combinatorial structure.
\end{abstract}

\maketitle

\section{Introduction}
Let $[n]$ denote the set of positive integers $\{1, \dots, n\}$. In this paper, we study several related combinatorial objects:
\begin{itemize}
\item Let $S_n$ be the set of all {\em permutations} of $[n]$, so that $|S_n|=n!$.

\item A {\em mapping} of length $n$ is any function $f$ from $[n]$ to $[n]$. Let $\mathcal{F}_n$ denote the set of all mappings on $[n]$, so that $|\mathcal{F}_n|=n^n$. 

\item \emph{Parking functions} and \emph{prime parking functions} will be defined later in Sections \ref{sec:parking} and \ref{sec:prime_parking}. As we will see, they lie between mappings and permutations.

\item Let $M$ be an $n$-element multiset of integers taken from $[n]$. A {\em multiset permutation} of $M$ is 
a sequence $(w_1, \ldots, w_n)$ 
such that the multiplicity of each element $j$ in $M$ is $|\{i : w_i = j\}|$.
Let $S_M$ denote the set of all permutations of $M$.
For example, $(2,1,4,2) \in S_{\{1,2,2,4\}}$.
\end{itemize}

A common feature of all these sets of combinatorial objects is \emph{invariance under permutation}: if one element belongs to a set, then so does any of its permutations. Multiset permutations can be seen as building blocks of the others in that each of them is a disjoint union of $S_M$'s. 

The study of cycles in random mappings and permutations has a rich history. We refer the reader to Arratia, Barbour, and Tavar\'{e} \cite{Arratia2003} and Flajolet and Odlyzko \cite{Flajolet1990} for a comprehensive treatise. Multiset permutations and parking functions are less studied, and thus will be the central focus of our paper. Note that while our investigation into parking functions follows the classical interpretation of digraphs as done for mappings and permutations as in \cite{Paguyo2025} and \cite{Rubey2025}, there is an alternate interpretation of parking function digraphs, investigated in \cite{King2019} and \cite{Lackner2016}, by considering the vertices as the parking spaces and the directed edges as the one-way streets.

We will take both algebraic and analytic views in our investigation. Sections \ref{sec:multi}, \ref{sec:mp}, \ref{sec:parking}, and \ref{sec:prime_parking} respectively present exact results on multiset permutations, mappings and permutations, parking functions, and prime parking functions. Section \ref{sec:aa} on the other hand is dedicated to asymptotic analysis, demonstrating the asymptotic \emph{equivalence of ensembles} between parking functions and mappings concerning the number of cyclic points in their associated connected digraphs.

We point to a surprising correspondence between \emph{terminal closers} and \emph{cyclic points} in multiset permutations in Section \ref{subsec:tc}, which in turn leads to the correspondence between terminal closers and cyclic points in mappings, permutations, and parking functions. Since parking functions describe a dynamic parking process, the equality in distribution between terminal closers and cyclic points is even more intriguing than in the context of mappings and permutations. Having $k$ terminal closers in a parking function ensures that the first $k$ cars are lucky (a car is {\em lucky} if it parks at its preferred spot) and that the $(k+1)$st car, having the same preference as one of the first $k$ cars, is unlucky and causes a collision. Thus the expected number of lucky cars in a parking function is lower bounded by the expected number of cyclic points, and moreover, our asympotic results on cyclic points show that the number of lucky cars until the first unlucky car is on average of the order of magnitude $\sqrt{n}$.

\section{Multiset permutations}
\label{sec:multi}
In this section, we introduce relevant terminology and background. Since multiset permutations are essential building blocks for mappings, permutations, and parking functions, results presented in this section will serve as a basis for our derivations throughout the paper.

\subsection{Cyclic points}
Given a multiset permutation $w= (w_1, \dots, w_n) \in S_M$, let $G_w$ denote the {\em digraph} on the vertex set $[n]$ with edges $i \rightarrow w_i$. It is well known that the digraph $G_w$ associated to any $w$ can be obtained from a disjoint union of directed cycles on some subset $C$ of $[n]$ (the set of {\em cyclic points}) by selecting some vertices in $C$ and rooting to them some trees whose edges are all directed towards the root. 
For $k\geq 1$, a directed $k$-cycle in $G_w$ is a {\em cycle of length $k$ in $w$}, a subset $\{j_1, \ldots, j_k\}$ of $[n]$ such that $w_{j_i} = j_{i + 1}$ for each $i \in [k - 1]$ and $w_{j_k} = j_1$.

We use the notation $\lambda \vdash m$ to indicate that $\lambda$ is a partition of $m$. In other words, if $\lambda=(1^{k_1(\lambda)}, 2^{k_2(\lambda)}, \dots)$, where $k_i(\lambda)$ denotes the number of $i$'s in $\lambda$, then $\sum_i ik_i(\lambda) = m$. Given a partition $\lambda$, let $a(M,\lambda)$ be the number of $w\in S_M$ such that $G_w$ has cycle type 
$\lambda=(1^{k_1(\lambda)}, 2^{k_2(\lambda)}, \dots)$, i.e., $G_w$ has $k_i(\lambda)$ cycles of length $i$. Write $a(M,i)$ for $a(M,(i))$, where $\lambda=(i)$ corresponds to $w \in S_M$ where $G_w$ is connected and has a cycle of length $i$. For $G_w$ with cycle type $\lambda$, form a monomial
\[u_w=p_1^{k_1} p_2^{k_2} \dots p_n^{k_n},\]
where $p_j$ is a power sum symmetric function $p_j=\sum_{i \geq 1} x_i^j$. See Figure \ref{fig:figure1} for the digraph representation of a multiset permutation $w=(6, 1, 2, 4, 1, 9, 1, 6, 8, 4, 2, 10) \in S_{\{1, 1, 1, 2, 2, 4, 4, 6, 6, 8, 9, 10\}}$ with $u_w=p_1p_3$. Define the augmented cycle index for multiset permutations
$Z_{S_M}=\sum_{w \in S_M} u_w$.

\begin{figure}[h!]
\centering
    \begin{tikzpicture}
	    [thick, every node/.style={circle, draw=black!100, inner sep=1pt, minimum size=.6cm}, scale=1.25]
	    
	    \node (6) at (1,0) {$6$};
	    \node (1) at (0,.5) {$1$};
	    \node (2) at (-1,0) {$2$};
	    \node (9) at (1.75,.433) {$9$};
	    \node (8) at (1.75,-.433) {$8$};
	    \node (3) at (-1.75,.433) {$3$};
	    \node (11) at (-1.75,-.433) {$11$};
	    \node (7) at (.433,1.25) {$7$};
	    \node (5) at (-.433,1.25) {$5$};
	    \node (12) at (-3,1.25) {$12$};
	    \node (10) at (-3,0.5) {$10$};
	    \node (4) at (-3,-.25) {$4$};

	    \draw[<-] (4) to [out=310,in=230,looseness=7] (4);
	    \draw[->] (12) -- (10);
	    \draw[->] (10) -- (4);
	    \draw[->] (3) -- (2);
	    \draw[->] (11) -- (2);
	    \draw[->] (2) -- (1);
	    \draw[->] (1) -- (6);
	    \draw[->] (6) -- (9);
	    \draw[->] (9) -- (8);
	    \draw[->] (8) -- (6);
	    \draw[->] (5) -- (1);
	    \draw[->] (7) -- (1);
	    
	\end{tikzpicture}
\caption{The digraph of the multiset permutation $w=(6, 1, 2, 4, 1, 9, 1, 6, 8, 4, 2, 10)$ in $ S_{\{1, 1, 1, 2, 2, 4, 4, 6, 6, 8, 9, 10\}}$ which consists of one $1$-cycle (fixed point) $(4)$, one $3$-cycle $(6, 9, 8)$, and tree branches hanging from the cycles.}
\label{fig:figure1}
\end{figure}

\begin{lemma}\label{lm:partition}
For $\lambda\vdash i$, we have
\begin{equation*}
a(M,\lambda) = i a(M,i)/z_\lambda,
\end{equation*}
where $z_\lambda=\prod_{j=1}^i j^{k_j} k_j!$ is as in Macdonald's book \cite[page 24]{Macdonald1995}.
\end{lemma}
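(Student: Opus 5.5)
The plan is to decompose each $w\in S_M$ into two independent pieces: a rooted forest that records the non-cyclic structure, and a permutation of the set of cyclic points. The multiset constraint will turn out to depend only on the forest. Then $a(M,\lambda)$ factors as (number of admissible forests) $\times$ (number of permutations of cycle type $\lambda$), and comparing with $\lambda=(i)$ eliminates the forest count.

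Concretely, for $w\in S_M$ let $C\subseteq[n]$ be its set of cyclic points and let $\sigma=w|_C$, which is a permutation of $C$. Let $F_w$ be the digraph obtained from $G_w$ by deleting the edges $c\to w_c$ for $c\in C$. Then $F_w$ is a forest of trees rooted at the vertices of $C$, with all edges directed toward the roots, and every vertex outside $C$ has out-degree one. The map $w\mapsto (C,F_w,\sigma)$ is a bijection from all mappings on $[n]$ onto the set of triples $(C,F,\sigma)$, where $F$ is such a rooted forest with root set $C$ and $\sigma$ is any permutation of $C$. The inverse adds the edges $c\to\sigma(c)$. The cycle type of $G_w$ is exactly the cycle type of $\sigma$.

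The key step is to check that membership in $S_M$ depends only on $(C,F)$. The multiplicity of $j$ in $w$ is the in-degree of $j$ in $G_w$. For $j\notin C$ this equals its in-degree in $F$. For $j\in C$ it equals its in-degree in $F$ plus one, because exactly one cyclic point $\sigma^{-1}(j)$ maps to $j$, whatever $\sigma$ is. Hence, if $N_i$ denotes the number of pairs $(C,F)$ with $|C|=i$ whose induced in-degree profile matches $M$, then for every $\lambda\vdash i$ we get
\[ a(M,\lambda)=N_i\cdot\#\{\sigma\in S_i \text{ of type }\lambda\}=N_i\,\frac{i!}{z_\lambda}, \]
using the classical count of permutations of a given cycle type. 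Taking $\lambda=(i)$, with $z_{(i)}=i$, gives $a(M,i)=N_i\,(i-1)!$. Dividing the two identities yields $a(M,\lambda)=i\,a(M,i)/z_\lambda$.

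The only point needing care is the claim that the admissibility of $(C,F)$ is independent of $\sigma$; the in-degree bookkeeping above settles it. Everything else is the standard forest/cycle decomposition of functional digraphs.
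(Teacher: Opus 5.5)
Your proof is correct and rests on the same idea as the paper's proof: the trees hanging off the cyclic points stay intact while the cyclic points are rearranged, so the count reduces to counting permutations of an $i$-set with cycle type $\lambda$. The paper does this with a one-line-notation and parenthesis-insertion map, sending $i$-cycles with a chosen starting point to type-$\lambda$ structures, each reached $z_\lambda$ times; you instead factor both $a(M,\lambda)$ and $a(M,i)$ through the forest count $N_i$, and you state explicitly the in-degree check, left implicit in the paper, that rearranging the cyclic points does not change the multiset $M$.
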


\begin{proof}
The digraph $G_w$ corresponding to $a(M,i)$ comprises rooted trees arranged in a length-$i$ cycle. We keep the hanging tree branches intact with their incident vertices on the cycle and decompose the length-$i$ cycle into smaller cycles.
 
To do this, we write the length-$i$ cycle in one-line notation. There are $i$ possible starting points, which introduces an additional scalar factor $i$. Then we insert pairs of parentheses from left to right into this one-line notation according to $\lambda$, making sure that the lengths of the cycles are non-decreasing. This procedure generates the same digraph $z_\lambda$ times and we compensate for the overcounting by dividing by $z_\lambda$.
 
We note that our construction is an adaption from counting permutations of cycle type $\lambda$ in $S_i$, except that in our case the numerator becomes $i a(M,i)$ instead of $i!$.
\end{proof}

\begin{proposition}\label{pr:partition}
The augmented cycle index for multiset permutations satisfies
\[Z_{S_M}=\sum_{w \in S_M} u_w=\sum_{i=1}^n i a(M,i) h_i,\]
where $h_i=\sum_{1\leq j_1 \leq j_2 \leq \cdots \leq j_i} x_{j_1}x_{j_2}\cdots x_{j_i}$ is a complete symmetric function (the sum of all monomials of degree $i$).
\end{proposition}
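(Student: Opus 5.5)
Every $w\in S_M$ has cycle type $\lambda\vdash i$, where $i$ is the number of cyclic points of $G_w$, and $1\le i\le n$ because a functional digraph on a finite set has at least one cycle. Grouping the terms of $Z_{S_M}$ by cycle type gives
\[
Z_{S_M}=\sum_{i=1}^n \sum_{\lambda\vdash i} a(M,\lambda)\, p_\lambda ,
\qquad p_\lambda=\prod_j p_j^{k_j(\lambda)} .
\]
In this grouping, $u_w=p_\lambda$ whenever $G_w$ has type $\lambda$, so the identity is simply a partition of the sum.

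Next I substitute Lemma~\ref{lm:partition}, $a(M,\lambda)=i\,a(M,i)/z_\lambda$. The factor $i\,a(M,i)$ depends only on $i$, so it comes out of the inner sum:
\[
Z_{S_M}=\sum_{i=1}^n i\,a(M,i)\sum_{\lambda\vdash i}\frac{p_\lambda}{z_\lambda}.
\]

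Finally I use the classical identity $h_i=\sum_{\lambda\vdash i} z_\lambda^{-1}p_\lambda$ from Macdonald's book \cite{Macdonald1995}. If a self-contained argument is wanted, it follows from the generating function
\[
\sum_{i\ge0} h_i t^i=\prod_k (1-x_kt)^{-1}=\exp\Bigl(\sum_{j\ge1} p_j t^j/j\Bigr).
\]
Expanding the exponential as $\prod_j \sum_{k_j} (p_jt^j/j)^{k_j}/k_j!$ and reading off the coefficient of $t^i$ gives exactly $\sum_{\lambda\vdash i} p_\lambda/\prod_j j^{k_j}k_j!$. Substituting this into the previous display yields the claim.

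There is no real obstacle. The only point needing care is that the grouping by cycle type really does collect every $w$ exactly once: each $w\in S_M$ has a well-defined cycle type $\lambda$ with $|\lambda|$ equal to its number of cyclic points, $1\le|\lambda|\le n$.
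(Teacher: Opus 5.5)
Your proof is correct and takes the same route as the paper: group the terms of $Z_{S_M}$ by cycle type, apply Lemma~\ref{lm:partition}, and use $h_i=\sum_{\lambda\vdash i} z_\lambda^{-1}p_\lambda$, which the paper cites from Stanley (Corollary 7.7.6). Your generating-function derivation of that identity is a valid self-contained supplement the paper leaves out.
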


\begin{proof}
By Corollary 7.7.6 in Stanley's book \cite{Stanley1999},
\[\sum_{i=1}^n i a(M,i) h_i =\sum_{i=1}^n i a(M,i) \left(\sum_{\lambda \vdash i} z_\lambda^{-1} p_\lambda \right),\]
where $z_\lambda$ is as defined earlier and $p_\lambda=p_{\lambda_1}p_{\lambda_2}\cdots$ is a power sum symmetric function for $\lambda=(\lambda_1, \lambda_2, \dots).$
The conclusion then follows from Lemma \ref{lm:partition}.
\end{proof}

\begin{example}
Take $M=\{1, 1, 2\}$ so that \[S_M=\{(1,1,2), (1,2,1), (2,1,1)\}.\] We compute that
\[Z_{S_M}=p_1+p_1^2+p_2=h_1+2h_2.\]
Take $M=\{1, 1, 2, 2\}$ so that \[S_M=\{(1,1,2,2), (1,2,1,2), (1,2,2,1), (2,1,1,2), (2,1,2,1), (2,2,1,1)\}.\] We compute that
\[Z_{S_M}=2p_1+2p_1^2+2p_2=2h_1+4h_2.\]
\end{example}

\begin{corollary}
\begin{equation*}
\sum_{i=1}^n i a(M,i) = |S_M|.
\end{equation*}
\end{corollary}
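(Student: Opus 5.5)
The plan is to specialize Proposition \ref{pr:partition} at a single variable. Set $x_1=1$ and $x_j=0$ for all $j\geq 2$. Under this specialization every power sum becomes $p_j=1$, so each monomial $u_w$ evaluates to $1$. The left-hand side $\sum_{w\in S_M}u_w$ therefore evaluates to $|S_M|$.

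On the right-hand side, each complete symmetric function becomes $h_i=1$, because the only monomial of degree $i$ surviving the specialization is $x_1^i$. The right-hand side thus evaluates to $\sum_{i=1}^n i\,a(M,i)$. Since Proposition \ref{pr:partition} is an identity of symmetric functions, it remains valid under this evaluation homomorphism, which gives the corollary.

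As an alternative that avoids symmetric functions, one can sum Lemma \ref{lm:partition} over all partitions. Every $w\in S_M$ has exactly one cycle type $\lambda\vdash i$ for some $i\in[n]$, so
\[
|S_M|=\sum_{i=1}^n\sum_{\lambda\vdash i}a(M,\lambda)=\sum_{i=1}^n i\,a(M,i)\sum_{\lambda\vdash i}z_\lambda^{-1}.
\]
The standard identity $\sum_{\lambda\vdash i}z_\lambda^{-1}=1$ then finishes the argument; it holds because $i!/z_\lambda$ counts the permutations of $[i]$ with cycle type $\lambda$.

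There is no real obstacle in either route. The only point to check is that every $w$ contributes to exactly one $a(M,\lambda)$, which is true since every functional digraph has at least one cycle. This also justifies that the sum starts at $i=1$.
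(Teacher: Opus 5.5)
Your proposal is correct, and your main argument is exactly the paper's proof: specialize Proposition~\ref{pr:partition} at $x_1=1$, $x_j=0$ for $j\geq 2$, so that $p_i=h_i=u_w=1$. Your alternative is also valid. It sums Lemma~\ref{lm:partition} over all partitions and uses $\sum_{\lambda\vdash i}z_\lambda^{-1}=1$, which is the same identity with the step from the $h_i$ to the $p_\lambda$ expansion unwound.
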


\begin{proof}
We take $x_1=1$ and $x_i=0$ for all $i\geq 2$, which yields that $p_i=h_i=1$ for all $i \geq 1$ and $u_w=1$ for all $w \in S_M$. By Proposition \ref{pr:partition}, the claimed identity is then immediate.
\end{proof}

\subsection{Terminal closers}
\label{subsec:tc}

Consider a partition $P$ of $[n]$ into parts $P_1, P_2, \ldots, P_n$, some of which may be empty.
In the FindStat database~\cite[St001050]{FindStat}, the {\em closers} of $P$ are defined to be those elements of $[n]$ which are maximal in their parts.
A closer $i$ is {\em terminal} if every $j \in \{i+1, \ldots, n\}$ is also a closer.
Terminal closers can also be interpreted in the context of multiset permutations.
Let $M$ be the $n$-element multiset over $[n]$ consisting of $|P_j|$ copies of $j$ for each $j$ in $[n]$, and define the multiset permutation $w$ of $M$ by
\begin{equation}\label{eq:bij-partition-closer}
    w_i = j \quad \mbox{if and only if} \quad i \in P_j.
\end{equation}
Notice that the elements $i, \ldots, n$ are terminal closers of $P$ if and only if $w_i \neq w_{i+1} \neq \cdots \neq w_n$.
Further, $\{i, \ldots, n\}$ is the set of {\em all} terminal closers of $P$ if and only if the number $i-1$ is not maximal in its part, which equates to having $w_{i-1} \in \{w_i, \ldots, w_n\}$.

For example, if $n = 6$, $P_1 = \{2,4\}$, $P_2 = \{1,3,6\}$, $P_3 = \{5\}$, and $P_4 = P_5 = P_6 = \varnothing$, then the closers are $4$, $5$, and $6$, and all are terminal.
Under the above correspondence $(P_1, \ldots, P_6) \mapsto (2,1,2,1,3,2)$, and the maximal end segment of distinct values is $(\ldots,1,3,2)$.
These are the labels of the parts $P_j$ containing terminal closers of the partition.
On the other hand, if we move the element $5$ from $P_3$ into $P_2$, then we obtain the partition $(\{2,4\}, \{1,3,5,6\}, \varnothing, \varnothing, \varnothing, \varnothing)$.
The closers here are $4$ and $6$, but only $6$ is a terminal closer.
Indeed, in the corresponding multiset permutation $(2,1,2,1,2,2)$, the maximal end segment of distinct values is $(\ldots, 2)$.

It would thus be natural to define the terminal closers of an arbitrary permutation $w$ of an arbitrary $n$-element multiset $M$ over $[n]$ to be the elements of the maximal end segment of distinct terms in $w$.
On the other hand, we could have instead defined the correspondence~\eqref{eq:bij-partition-closer} between multiset permutations and $n$-part partitions of $[n]$ by $w_{n + 1 - i} = j$ if and only if $i \in P_j$, in which case the natural analogue of the terminal closers of $P$ would be the elements of the maximal {\em initial} segment of distinct terms in $w$.
We choose the latter for the purposes of this paper, defining the set of {\em terminal closers of $w$} to be $\{w_1, \ldots, w_k\}$ where $w_1 \neq w_2 \neq \cdots \neq w_k$ but $w_{k+1}=w_i$ for some $i \in [k]$. 
(By default, $w$ has $n$ terminal closers if $w_1, \ldots, w_n$ are all distinct.) 

We proceed to describe a surprising correspondence between cyclic points and terminal closers for multiset permutations.
Notice that, if a multiset permutation $w \in S_M$ has exactly $k$ cyclic points with $k < n$, then $G_w$ is not simply a union of cycles. In particular, at least one cyclic point has in-degree at least $2$ in $G_w$. Using this observation and the commonplace tool of  Pr\"ufer code~{\cite{Prufer1918}}, we shall associate to $w$ a multiset permutation whose terminal closers are the cyclic points of $w$. We note that Pr\"ufer code and digraphs associated to multiset permutations (and mappings) have been used numerous times in the literature to provide combinatorial bijections, for instance, by Labelle and Gessel (see, e.g.,~{\cite{Gessel1995, Gessel2016}}) in proving various generalizations of the Lagrange inversion formula.

\begin{theorem}\label{thm:cyclic-terminal}
For any multiset $M$, the permutations of $M$ with a given set $S$ of cyclic points are in bijection with the permutations of $M$ whose set of terminal closers is $S$.
\end{theorem}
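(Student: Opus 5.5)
The plan is to build the bijection explicitly by splitting the digraph $G_w$ into its cyclic part and its forest part, and encoding each part separately. The cyclic part becomes an initial segment of distinct letters; the forest part becomes a Pr\"ufer-type code whose first letter is forced to repeat one of those letters.

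\emph{Decomposition.} Let $w\in S_M$ have cyclic point set $S=\{s_1<\dots<s_k\}$. Then $G_w$ decomposes into two pieces:
\begin{itemize}
\item the permutation $\sigma$ of $S$ given by $\sigma(s)=w_s$ for $s\in S$;
\item a rooted forest $F$ on $[n]$ whose root set is $S$, with edges $v\to w_v$ for $v\notin S$, all directed towards the roots.
\end{itemize}
Conversely, any permutation $\sigma$ of $S$ together with any such forest $F$ determines a mapping whose set of cyclic points is exactly $S$. The multiset of values of this mapping is $S$ (from $\sigma$) together with the multiset of parents in $F$ (one per non-root vertex). So $w\in S_M$ exactly when the parent multiset of $F$ equals $M\setminus S$, taken as a multiset difference. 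This decomposition is routine.

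\emph{Encoding the forest.} I will use the standard Pr\"ufer code for rooted forests on $[n]$ with prescribed root set $S$. Repeatedly delete the smallest leaf among the non-root vertices and record its parent. This produces a sequence $c_1,\dots,c_{n-k}$, and the classical fact is the following:
\begin{itemize}
\item the map $F\mapsto (c_1,\dots,c_{n-k})$ is a bijection onto $[n]^{n-k-1}\times S$, so the last entry always lies in $S$;
\item each vertex $v$ occurs in the code exactly (in-degree of $v$ in $F$) times.
\end{itemize}
I will re-prove this briefly by the usual induction, checking at each step that the smallest non-root leaf is recoverable from the remaining code. The second point says the multiset of the code is precisely the parent multiset, namely $M\setminus S$.

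\emph{Defining $\Phi(w)$.} Set
\[
\Phi(w)=\bigl(\sigma(s_1),\dots,\sigma(s_k),\,c_{n-k},c_{n-k-1},\dots,c_1\bigr),
\]
that is, the cycle data first and then the code reversed, so that the entry in position $k+1$ lies in $S$.
\begin{itemize}
\item Its multiset is $S\cup(M\setminus S)=M$.
\item Its first $k$ entries are distinct and form the set $S$.
\item If $k<n$, the $(k+1)$st entry lies in $S$, so the set of terminal closers of $\Phi(w)$ is exactly $S$. If $k=n$, $w$ is a permutation, $\Phi(w)=w$, and all $n$ entries are terminal closers.
\end{itemize}

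\emph{Inverse.} Given $u\in S_M$ whose terminal closer set is $S$ with $|S|=k$, read off $\sigma$ from $u_1,\dots,u_k$. The entries $u_1,\dots,u_k$ are distinct with set $S$, so $\sigma(s_i)=u_i$ is a permutation of $S$. The tail $u_{k+1},\dots,u_n$, reversed, lies in $[n]^{n-k-1}\times S$ because $u_{k+1}\in S$; Pr\"ufer decoding turns it into a forest rooted at $S$. Reassembling $\sigma$ and the forest gives $w$ with cyclic set $S$, and the multiset bookkeeping above guarantees $w\in S_M$.

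The main obstacle I expect is making the forest Pr\"ufer bijection precise in this multiset setting. Two things need care:
\begin{itemize}
\item Decoding must be well defined for every sequence in $[n]^{n-k-1}\times S$, with no acyclicity failure. The decoder repeatedly attaches the smallest non-root vertex that is not yet used and does not occur in the remaining code. I need to verify that such a vertex always exists and that the final attachment goes to a root.
\item Occurrence counts must equal in-degrees, which is what ensures the multiset $M$ is preserved.
\end{itemize}
I would handle both by induction on $n-k$, deleting one leaf at a time.
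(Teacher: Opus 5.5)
Your proposal is correct and is essentially the paper's own construction. In both, the first $k$ entries are $w_{s_1},\dots,w_{s_k}$ for the cyclic points $s_1<\dots<s_k$. The remaining entries are a Pr\"ufer-type code of the hanging forest, written backwards from position $n$, so that the $(k+1)$st entry is the parent of the last deleted leaf and therefore lies in $S$.

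The differences are minor. The paper deletes the leaf with the largest label rather than the smallest, which is immaterial. The paper also leaves bijectivity at ``naturally follows''. You instead plan to prove it explicitly: that decoding is well defined on $[n]^{n-k-1}\times S$, that the last entry is forced into $S$, and that occurrence counts equal in-degrees so the multiset $M$ is preserved.
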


\begin{proof}
Let $M$ be an $n$-element multiset of positive integers taken from $[n]$, and let $w$ be an element of $S_M$, $w = (w_1, \ldots, w_n)$.
It is not hard to see that $w$ has at least one cycle, for the digraph $G_w$ associated to $w$ has $n$ edges and $n$ vertices.
Let $\{j_1, \ldots, j_k\}$ be the set of cyclic points of $G_w$, where $j_i < j_{i + 1}$ for each $i \in [k-1]$.
We construct an element $w'$ of $S_M$ whose terminal closers are $j_1, \ldots, j_k$ as follows.

\begin{enumerate}
    \item\label{item:cycles->terminals} For each $i \in [k]$, define $w'_i = w_{j_i}$.
    \item\label{item:Prufer} If $w$ is not a permutation, then $k < n$, and $G_w$ has a leaf.
    We define $w'_n, w'_{n-1}, \ldots, w'_{k+1}$ recursively.
    At step $i$, for $i \in \{0,1,\ldots, n-k-1\}$, let $\ell_i$ be the leaf of $G_w$ with the largest label, delete $\ell_i$ from $G_w - \{\ell_0, \ldots, \ell_{i-1}\}$, and define $w'_{n-i} = w_{\ell_i}$.
\end{enumerate}

Note that each one of $j_1, \ldots, j_k$ is a terminal closer of $w'$.
If $k < n$, then since $G_w - \{\ell_0, \ldots, \ell_{n-k-2}\}$ consists only of cycles and a single leaf $\ell_{n-k-1}$ adjacent to a cyclic point, we have $w_{\ell_{n-k-1}} = w'_{k+1} \in \{j_1, \ldots, j_k\}$.
Thus, $\{j_1, \ldots, j_k\}$ is precisely the set of terminal closers of $w'$.

The inspiration for item~\ref{item:Prufer} above comes from the Pr\"ufer code~\cite{Prufer1918}. That this construction is a bijection naturally follows.
\end{proof}

\begin{example}
    For the multiset permutation $w=(6, 1, 2, 4, 1, 9, 1, 6, 8, 4, 2, 10)$ with cyclic points $4$, $6$, $8$, and $9$, whose digraph $G_w$ is depicted in Figure~\ref{fig:figure1}, the permutation $w'$ having terminal closers $4,6,8,9$ obtained from the bijection in Theorem~\ref{thm:cyclic-terminal} is
    $(4,9,6,8,6,1,2,1,1,4,2,10)$.
    The digraph $G_{w'}$ is depicted in Figure~\ref{fig:w'}.

    \begin{figure}
        \centering
        \begin{tikzpicture}
	    [thick, every node/.style={circle, draw=black!100, inner sep=1pt, minimum size=.6cm}, scale=1.25]
	    
	    \node (12) at (-2.75,0) {$12$};
	    \node (10) at (-1.75,0) {$10$};
	    \node (4) at (-.75,0) {$4$};
	    \node (8) at (0,1) {$8$};
	    \node (1) at (.75,0) {$1$};
	    \node (9) at (1.75,0) {$9$};
	    \node (2) at (2.75,0) {$2$};
	    \node (11) at (3.5,.433) {$11$};
	    \node (7) at (3.5,-.433) {$7$};
	    \node (6) at (1.5,.75) {$6$};
	    \node (3) at (1.75,1.65) {$3$};
	    \node (5) at (2.4,1) {$5$};

	    \draw[->] (1) -- (4);
	    \draw[->] (12) -- (10);
	    \draw[->] (10) -- (4);
	    \draw[->] (7) -- (2);
	    \draw[->] (11) -- (2);
	    \draw[->] (6) -- (1);
	    \draw[->] (3) -- (6);
	    \draw[->] (2) -- (9);
	    \draw[->] (4) -- (8);
	    \draw[->] (5) -- (6);
	    \draw[->] (8) -- (1);
	    \draw[->] (9) -- (1);
	    \end{tikzpicture}
        
        \caption{The digraph of the multiset permutation $w' = (4,9,6,8,6,1,2,1,1,4,2,10)$ in $S_{\{1, 1, 1, 2, 2, 4, 4, 6, 6, 8, 9, 10\}}$ obtained from the multiset permutation $w \in S_{\{1, 1, 1, 2, 2, 4, 4, 6, 6, 8, 9, 10\}}$ (whose digraph is depicted in Figure~\ref{fig:figure1}) via the bijection in Theorem~\ref{thm:cyclic-terminal}.
        The set $\{4,6,8,9\}$ of terminal closers of $w'$ is the set of cyclic points of $w$.}
        \label{fig:w'}
    \end{figure}
\end{example}

\begin{corollary}\label{cor:t(M,k)-and-a(M,k)}
	Let $M$ be a multiset. Let $a(M, k)$ denote the number of permutations $w$ of the multiset $M$ for which $G_w$ is connected and has a cycle of length $k$. Let $t(M,k)$ denote the number of permutations of $M$ with exactly $k$ terminal closers. Then
	$a(M,k) = t(M,k) / k$.
\end{corollary}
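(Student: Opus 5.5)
Counting by cyclic points reduces everything to connected digraphs. By Theorem~\ref{thm:cyclic-terminal}, summing over all $k$-element sets $S$, the number of $w \in S_M$ with exactly $k$ cyclic points equals $t(M,k)$. So it suffices to show that the number $c(M,k)$ of $w\in S_M$ with exactly $k$ cyclic points equals $k\,a(M,k)$.

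To count $c(M,k)$, I would group the $w$ with exactly $k$ cyclic points by cycle type. These are exactly the $w$ whose cycle type $\lambda$ is a partition of $k$. Summing over such $\lambda$ gives
\[
c(M,k)=\sum_{\lambda\vdash k} a(M,\lambda).
\]
Lemma~\ref{lm:partition} turns each term into $k\,a(M,k)/z_\lambda$, so
\[
c(M,k)=k\,a(M,k)\sum_{\lambda\vdash k} z_\lambda^{-1}.
\]
The classical identity $\sum_{\lambda\vdash k} z_\lambda^{-1}=1$ then gives $c(M,k)=k\,a(M,k)$. That identity holds because $k!/z_\lambda$ is the number of permutations in $S_k$ of cycle type $\lambda$. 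Alternatively, it follows by specializing $x_1=1$, $x_i=0$ for $i\ge2$ in $h_k=\sum_{\lambda\vdash k} z_\lambda^{-1}p_\lambda$, exactly as in the proof of the earlier corollary. Combining the two steps, $t(M,k)=c(M,k)=k\,a(M,k)$, which is the claim.

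The step that needs care is applying Lemma~\ref{lm:partition} correctly. Its content is that a multiset permutation with cyclic-point set of size $k$ and cycle type $\lambda\vdash k$ arises from a connected one with a $k$-cycle by rearranging the cycle, keeping the hanging trees. That construction uses the same cyclic points, so the counts match for each fixed $M$. This is precisely what Lemma~\ref{lm:partition} asserts, so I would simply invoke it. One could also bypass the sum over $\lambda$ entirely: there is a direct $k$-to-one correspondence between (connected $w$ with a $k$-cycle, choice of starting point on the cycle) and arbitrary $w$ with $k$ cyclic points. It is obtained by writing the cycle in one-line form and reading it as a permutation of the cyclic points via the fundamental (Foata) transformation. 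I would mention this only as a remark.
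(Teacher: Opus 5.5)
Your proof is correct. It shares its first step with the paper: you sum Theorem~\ref{thm:cyclic-terminal} over all $k$-element sets $S$, which identifies $t(M,k)$ with the number of $w\in S_M$ having exactly $k$ cyclic points. The second step is organized differently.

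The paper works on a fixed cyclic set $S$. The restriction of $w$ to $S$ is a permutation of $S$, and it can be replaced by any of the $k!$ permutations of $S$ without changing the hanging trees, the multiset $M$, or the set of cyclic points. Exactly $(k-1)!$ of these are single $k$-cycles, so exactly a $1/k$ fraction of the $w$ with cyclic set $S$ are connected.

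You instead sort by cycle type, apply Lemma~\ref{lm:partition} to every $\lambda\vdash k$, and finish with the class-equation identity $\sum_{\lambda\vdash k} z_\lambda^{-1}=1$. The two arguments express the same fact at different resolutions. The paper's count is the single term $k!/z_{(k)}=(k-1)!$ of the class equation you sum. Lemma~\ref{lm:partition} is itself proved by rearranging the cyclic part while keeping the trees fixed. The paper's version is shorter and self-contained: one fiber count, no symmetric-function identity. Yours has the merit of using only previously stated results as black boxes.

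One small correction to your closing remark. The fundamental transformation gives a bijection, not a $k$-to-one map, between pairs (connected $w$, starting point on its cycle) and arbitrary $w$ with $k$ cyclic points. The $k$-to-one map is the forgetful one from pairs to connected $w$. In any case, the fiber argument above makes the Foata transformation unnecessary.
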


\begin{proof}
The claim readily follows once we note that exactly $(k-1)!$ of the $k!$ permutations of $S$ consist of a single cycle, as opposed to a disjoint union of at least two cycles.
\end{proof}

Let $M$ be a multiset, which is denoted by $\bigl(m_1^{n_1}, \ldots, m_{j_M}^{n_{j_M}}\bigr)$, where $j_M$ is the number of distinct parts of $M$, $1 \leq m_1 < \cdots < m_{j_M} \leq n$ and $\sum_i n_i = n$. Corollary \ref{cor:t(M,k)-and-a(M,k)} provides an alternative (and faster) way to calculate $a(M, k)$, as only the values of the initial $k+1$ terms in $w \in S_M$ matter. We present two special cases as examples.

\begin{example}[{$a(M,1) = t(M,1)$}]
When $k = 1$, for a permutation $w = (w_1, \ldots, w_n)$ of $M$ to have a single terminal closer, we must have some $n_s \geq 2$ so that $w_1 = w_2$.
Following $w_1$ and $w_2$, we can have any of the $(n-2)! / (\prod_i n_i'!)$ permutations of the remaining elements of $M$, where $n_i' = n_i$ for $i \neq s$ and $n_s' = n_s - 2$.
Summing over all $s$ such that $n_s \geq 2$, we obtain $t(M,1)$.
For instance, when $M=\{1,1,1,1,2,2,2,2,3,3\} = \bigl( 1^{4}, 2^{4}, 3^{2} \bigr)$, we obtain
\[
t(M,1) = \frac{8!}{2! \cdot 4! \cdot 2!} + \frac{8!}{4! \cdot 2! \cdot 2!} + \frac{8!}{4! \cdot 4!} = 910.
\]
\end{example}

\begin{example}[{$a(M,j_M)$}]\label{ex}
	If every $m_i$ in $M$ appears as a terminal closer, then we have $j_M!$ ways to permute the terminal closers and can choose any permutation of $\bigl( m_1^{n_1 - 1}, \ldots, m_{j_M}^{n_{j_M} - 1} \bigr)$ to follow. Thus,
	\[
	t(M,j_M) = j_M ! \frac{(n-j_M)!}{\prod_{i} (n_i - 1)!}, \qquad \mbox{and} \qquad a(M,j_M) = \frac{(n-j_M)! (j_M - 1)!}{\prod_i (n_i - 1)!}.
	\]
\end{example}

\section{Mappings and permutations}
\label{sec:mp}

Any set of permutation invariant mappings is a union of certain $S_M$'s, where each $M$ is an $n$-element multiset of positive integers taken from $[n]$. In particular, $\mathcal{F}_n$ is the union of $S_M$ for all $n$-element multisets $M$ of $[n]$.
Permutations are trivially permutation invariant and $S_n=S_M$ when the $n$-element multiset $M$ exactly equals $[n]$. Note that $S_n \subseteq \mathcal{F}_n$. Given a mapping $f= (f_1, \dots, f_n) \in \mathcal{F}_n$, let $G_f$ denote the digraph on the vertex set $[n]$ with edges $i \rightarrow f_i$.

Mappings and permutations have been well studied. We present some standard results in this section.

\begin{theorem}[{see~\cite[section 3.3.13]{Goulden1983}}]\label{main-cycle-count: mapping}
The number of mappings $f$ of length $n$ where $G_f$ has $k_i$ cycles of length $i$ for every $i \in [n]$ is given by
\begin{equation*}
\frac{1}{\prod_{i=1}^n i^{k_i} k_i!}\binom{n}{k} k! \cdot k n^{n-k-1},
\end{equation*}
where $k=\sum_{i=1}^n k_i i$ is the number of cyclic points with $1\leq k\leq n$.
\end{theorem}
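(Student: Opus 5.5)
The plan is to split the count into two factors: the number of mappings whose cycle set is a given $k$-set $C$, and the number of ways to arrange $C$ into disjoint cycles of the prescribed type. Once the set of cyclic points $C$ with $|C|=k$ is fixed, a mapping is determined by two independent pieces of data. The first is a permutation $\sigma$ of $C$, namely the restriction of $f$ to $C$. The second is a forest on $[n]$ whose roots are exactly the elements of $C$, with all edges directed toward the roots; this encodes $f$ on $[n]\setminus C$. The cycle type of $G_f$ is the cycle type of $\sigma$ alone, so the count factors as
\[
\binom{n}{k}\cdot\#\{\sigma\in S_C \text{ of type } (1^{k_1},2^{k_2},\dots)\}\cdot F(n,k),
\]
where $F(n,k)$ denotes the number of rooted forests on $[n]$ with a prescribed root set of size $k$.

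The middle factor is the classical count $k!/\prod_i i^{k_i}k_i! = k!/z_\lambda$. This is the same computation as in the proof of Lemma \ref{lm:partition}: write $C$ in one-line order, insert parentheses according to $\lambda$, and divide by the overcount $z_\lambda$.

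The remaining step, and the only one of substance, is to show $F(n,k)=k n^{n-k-1}$, the generalized Cayley formula. I would prove it by a Pr\"ufer-type argument in the spirit of Theorem \ref{thm:cyclic-terminal}. Repeatedly delete the largest-labelled non-root leaf and record its parent. This produces a sequence of length $n-k$. Its last entry is forced to lie in $C$, since at the final step only roots plus one leaf remain. The other $n-k-1$ entries are arbitrary elements of $[n]$. Decoding is the usual one: at each stage, the leaf being removed is the largest element not in $C$ that has not yet been removed and does not appear later in the sequence. This gives a bijection with $[n]^{n-k-1}\times C$, hence $F(n,k)=kn^{n-k-1}$.

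As an alternative route to this step, I could avoid the forest bijection and instead sum the counts of Example \ref{ex}/Corollary \ref{cor:t(M,k)-and-a(M,k)} over multisets. Mappings with exactly the cyclic set $C$ correspond bijectively, via Theorem \ref{thm:cyclic-terminal}, to mappings whose terminal closers are exactly $C$. The latter have $w_1,\dots,w_k$ an arrangement of $C$, $w_{k+1}\in C$, and the rest arbitrary, giving $k!\cdot k\cdot n^{n-k-1}$ in total. Dividing by $k!$ over the choices of $\sigma$ again yields $kn^{n-k-1}$ forests.

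Multiplying the three factors gives
\[
\binom{n}{k}\frac{k!}{z_\lambda}\,k n^{n-k-1},
\]
which is the claimed formula. I expect the main care to be needed in the edge case $k=n$, where we need $F=1$ and $kn^{-1}=1$, which holds, and in verifying that the decoding is well defined. Using the terminal-closer route sidesteps the decoding issue entirely, since it relies only on the already established Theorem \ref{thm:cyclic-terminal}.
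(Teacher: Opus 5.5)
Your proof is correct, but it takes a different route from the paper. The paper gives no argument for this statement and simply cites Goulden--Jackson. Within its own framework, the formula would come out the way Theorem \ref{main-cycle-count} does for parking functions: count words with exactly $k$ terminal closers (for mappings this is $\binom{n}{k}k!\cdot k\,n^{n-k-1}$, as remarked at the start of Section \ref{sec:aa}), pass to connected digraphs with a $k$-cycle via Theorem \ref{thm:cyclic-terminal} and Corollary \ref{cor:t(M,k)-and-a(M,k)}, and then split the $k$-cycle into type $\lambda$ with Lemma \ref{lm:partition}. Your main argument instead uses the classical decomposition of a functional digraph into a permutation of the cyclic points and a forest rooted at them, and proves the generalized Cayley formula $kn^{n-k-1}$ by a Pr\"ufer code. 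Your ``alternative route'' is essentially the paper's.

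The two are closer than they look. The bijection of Theorem \ref{thm:cyclic-terminal} is exactly your pair ($\sigma$, forest) written as a single word: $f$ restricted to $C$ fills the first $k$ slots, and the reversed Pr\"ufer code of the forest fills the last $n-k$ slots. So invoking that theorem does not really sidestep the decoding, since the paper's proof asserts bijectivity by appeal to the same Pr\"ufer argument. When you write the decoding out, the leaf removed at stage $i$ must avoid the entries in position $i$ as well as later positions. You should also note that such a leaf always exists because the last entry lies in $C$.

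What your version buys is a self-contained proof specific to mappings, where the forest is unconstrained. What the terminal-closer formulation buys is that one only has to count words with a prescribed initial segment inside a permutation-invariant family. That is exactly what lets the paper treat parking and prime parking functions by Pollak's circular argument, where no direct forest count is available.
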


\begin{theorem}\label{main-cycle-count-special: mapping}
Take $1\leq k\leq n$. The number of mappings $f$ of length $n$ for which $G_f$ is connected and has a cycle of length $k$ is given by
\begin{equation*}
A(n, k)=k! \binom{n}{k} n^{n-k-1}.
\end{equation*}
\end{theorem}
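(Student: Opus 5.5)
The plan is to derive Theorem~\ref{main-cycle-count-special: mapping} from Theorem~\ref{main-cycle-count: mapping} by specialising to a single cycle. A mapping $f$ whose digraph $G_f$ is connected with a cycle of length $k$ has cycle type $\lambda=(k)$. In the notation of Theorem~\ref{main-cycle-count: mapping}, this means $k_k=1$ and $k_i=0$ for $i\neq k$, so that $\sum_i k_i i = k$.

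The prefactor $1/\prod_i i^{k_i}k_i!$ then equals $1/k$. Substituting into the formula gives
\[
\frac{1}{k}\binom{n}{k}k!\cdot k n^{n-k-1} = k!\binom{n}{k}n^{n-k-1},
\]
which is the claimed $A(n,k)$. This completes the main route.

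As an independent check, and as an alternative proof consistent with the paper's multiset viewpoint, I would give a direct count.
\begin{enumerate}
\item Choose the $k$ cyclic points: $\binom{n}{k}$ ways.
\item Arrange them into a single directed cycle: $(k-1)!$ ways.
\item Attach the remaining $n-k$ vertices as a rooted forest whose roots are exactly the $k$ cyclic points. By the generalized Cayley formula this can be done in $k n^{n-k-1}$ ways.
\end{enumerate}
The product is $\binom{n}{k}(k-1)!\,k\,n^{n-k-1}=k!\binom{n}{k}n^{n-k-1}$. For $k=n$ we read $k n^{n-k-1}=n\cdot n^{-1}=1$, giving $(n-1)!$ cyclic permutations, which is correct.

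Alternatively, one could sum Corollary~\ref{cor:t(M,k)-and-a(M,k)} over all multisets $M$. This counts mappings whose first $k$ values are distinct and whose $(k+1)$st value repeats one of them: $n^{\underline{k}}\cdot k\cdot n^{n-k-1}$ such mappings. Dividing by $k$ gives the same $A(n,k)$.

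The only step needing care is the forest count $k n^{n-k-1}$, which is a standard result. Everything else is direct substitution, so no real obstacle is expected.
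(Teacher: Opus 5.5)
Your main route is the paper's proof: specialize Theorem~\ref{main-cycle-count: mapping} to $(k_1,\dots,k_n)=e_k$, so the prefactor $1/k$ cancels the factor $k$ and leaves $k!\binom{n}{k}n^{n-k-1}$. Your two supplementary checks are also correct: the direct count (cycle, then a rooted forest counted as $k n^{n-k-1}$) and the terminal-closer count via Corollary~\ref{cor:t(M,k)-and-a(M,k)}, which matches the remark in Section~\ref{sec:aa}.
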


\begin{proof}
Theorem \ref{main-cycle-count-special: mapping} is a special case of Theorem \ref{main-cycle-count: mapping} where we take $(k_1, \dots, k_n)$ to be the elementary vector $e_k$ that is $1$ in the $k$th position and $0$ elsewhere.
\end{proof}

\begin{remark}\label{rk:mapping}
Let $C(n)=\sum_{k=1}^n A(n, k)$. Then $C(n)$ counts the number of mappings $f$ of length $n$ for which $G_f$ is connected. The first few terms of $\{C(n)\}_{n \geq 1}$ are given by
\[1, 3, 17, 142, 1569, 21576, 355081, 6805296,\]
which is \cite[OEIS A001865]{OEIS}.
\end{remark}

Following the notation in Theorem \ref{main-cycle-count-special: mapping}, $A(n,i)$ is the number of mappings $f$ of length $n$ such that $G_f$ is connected 
and has a cycle of length $i$. For $G_f$ with cycle type $\lambda=(1^{k_1(\lambda)}, 2^{k_2(\lambda)}, \dots)$, form a monomial 
\[u_f=p_1^{k_1} p_2^{k_2} \dots p_n^{k_n},\]
where $p_j$ is a power sum symmetric function. Define the augmented cycle index for mappings
\begin{equation*}
Z_n=\sum_{f \in \mathcal{F}_n} u_f,
\end{equation*}
where the sum is over all mappings of length $n$, so $Z_n$ is an inhomogeneous symmetric function and $A(n,i)$ is the coefficient of $p_i$ in $Z_n$.

\begin{proposition}\label{pr:mapping-augmented}
The augmented cycle index for mappings satisfies
\[Z_n=\sum_{f \in \mathcal{F}_n} u_f=\sum_{i=1}^n i A(n,i) h_i,\]
where $h_i$ is a complete symmetric function (the sum of all monomials of degree $i$).
\end{proposition}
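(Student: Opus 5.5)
The plan is to reduce the statement to Proposition \ref{pr:partition} by decomposing $\mathcal{F}_n$ into multiset permutation classes. As noted at the start of this section, $\mathcal{F}_n$ is the disjoint union of the sets $S_M$ over all $n$-element multisets $M$ of $[n]$. For $f \in S_M$, the digraph $G_f$ is exactly the digraph $G_w$ with $w=f$ from Section \ref{sec:multi}. Hence $u_f$ coincides with the monomial $u_w$, and
\[
Z_n=\sum_{M} \sum_{w\in S_M} u_w = \sum_M Z_{S_M},
\]
where the sum runs over all $n$-element multisets $M$ of $[n]$.

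Next I apply Proposition \ref{pr:partition} to each summand, which gives $Z_{S_M}=\sum_{i=1}^n i\, a(M,i) h_i$. Interchanging the two finite sums yields
\[
Z_n=\sum_{i=1}^n i\Bigl(\sum_M a(M,i)\Bigr) h_i .
\]
It then remains to identify $\sum_M a(M,i)$ with $A(n,i)$. By definition, $a(M,i)$ counts the $w\in S_M$ for which $G_w$ is connected and has a cycle of length $i$. Summing over the disjoint classes $S_M$ therefore counts exactly the mappings $f\in\mathcal{F}_n$ for which $G_f$ is connected with a cycle of length $i$, and this count is $A(n,i)$ as in Theorem \ref{main-cycle-count-special: mapping}.

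No step here is a real obstacle. The only point needing care is that the partition of $\mathcal{F}_n$ into the $S_M$ is genuinely disjoint and exhaustive: each mapping determines its multiset of values uniquely, so it lies in exactly one $S_M$. Given that, the identification of $u_f$ with $u_w$ is immediate, since both are defined from the same digraph.

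As an alternative consistency check, I can argue directly via Lemma \ref{lm:partition}, summed over $M$. This gives that the number of mappings of cycle type $\lambda\vdash i$ that decompose from a connected $i$-cycle structure is $iA(n,i)/z_\lambda$. That matches Theorem \ref{main-cycle-count: mapping}, because $iA(n,i)/z_\lambda = \binom{n}{i} i!\, i n^{n-i-1}/z_\lambda$. Combining with $h_i=\sum_{\lambda\vdash i} z_\lambda^{-1}p_\lambda$ then reproduces the claimed identity.
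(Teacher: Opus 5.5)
Your proposal is correct and takes the same approach as the paper: write $\mathcal{F}_n$ as the disjoint union of the classes $S_M$ and apply Proposition~\ref{pr:partition} to each class. The paper's proof only says the result ``follows readily''; you add the explicit summation step and the identification $\sum_M a(M,i)=A(n,i)$, which is exactly the detail it leaves implicit.
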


\begin{proof}
Note that mappings are permutation invariant and thus $\mathcal{F}_n$ is a union of certain $S_M$'s. The result then follows readily from Proposition \ref{pr:partition}.
\end{proof}

\begin{example}
Take $n=3$. We compute that
\[Z_3=9p_1+6p_2+2p_3+6p_1^2+3p_1p_2+p_1^3=9h_1+12h_2+6h_3.\]
\end{example}

\begin{corollary}
\begin{equation*}
\sum_{i=1}^n i A(n,i) = n^n.
\end{equation*}
\end{corollary}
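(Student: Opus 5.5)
The identity $\sum_{i=1}^n iA(n,i)=n^n$ follows from Proposition~\ref{pr:mapping-augmented} by the same specialization used for the corresponding corollary for multiset permutations. Set $x_1=1$ and $x_i=0$ for all $i\ge 2$. Under this specialization every power sum becomes $p_j=1$, so each monomial $u_f=p_1^{k_1}\cdots p_n^{k_n}$ equals $1$. Hence the left-hand side $Z_n=\sum_{f\in\mathcal{F}_n}u_f$ evaluates to $|\mathcal{F}_n|=n^n$. Likewise every complete symmetric function becomes $h_i=1$, since the only monomial of degree $i$ that survives is $x_1^i$. The right-hand side $\sum_{i=1}^n iA(n,i)h_i$ therefore reduces to $\sum_{i=1}^n iA(n,i)$, and equating the two sides gives the claim.

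As a cross-check, the identity can also be obtained without symmetric functions. Decompose $\mathcal{F}_n=\bigsqcup_M S_M$ over all $n$-element multisets $M$ from $[n]$. For each $M$ the earlier corollary gives $\sum_i i\,a(M,i)=|S_M|$. Moreover $A(n,i)=\sum_M a(M,i)$, because a mapping counted by $A(n,i)$ lies in exactly one $S_M$. Summing over $M$ then yields $\sum_i iA(n,i)=\sum_M|S_M|=n^n$.

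The only point needing care is that the specialization is legitimate, since $Z_n$ is an inhomogeneous symmetric function. Both sides of Proposition~\ref{pr:mapping-augmented} are finite sums, so evaluating them at $x=(1,0,0,\dots)$ is just a ring homomorphism, and this step is routine. Alternatively, one could verify the identity directly from Theorem~\ref{main-cycle-count-special: mapping} by checking $\sum_k k\cdot k!\binom{n}{k}n^{n-k-1}=n^n$ via a telescoping argument, but the specialization route is shorter and mirrors the paper's own method.
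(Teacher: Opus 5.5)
Your proposal is correct and matches the paper's proof: both specialize $x_1=1$, $x_i=0$ for $i\ge 2$ in Proposition~\ref{pr:mapping-augmented}, so that $p_i=h_i=1$ and $u_f=1$, which turns the identity into $n^n=\sum_{i=1}^n iA(n,i)$. Your cross-check by summing the multiset-permutation corollary over the decomposition $\mathcal{F}_n=\bigsqcup_M S_M$ is also valid, but it is not needed.
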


\begin{proof}
We take $x_1=1$ and $x_i=0$ for all $i\geq 2$, which yields that $p_i=h_i=1$ for all $i \geq 1$ and $u_f=1$ for all $f \in \mathcal{F}_n$. By Proposition \ref{pr:mapping-augmented}, the claimed identity is then immediate.
\end{proof}

Since permutations are mappings with exactly $n$ cyclic points, the following results may be seen as special cases of Theorems \ref{main-cycle-count: mapping} and \ref{main-cycle-count-special: mapping} and Proposition \ref{pr:mapping-augmented}.

\begin{theorem}\label{main-cycle-count: permutation}
The number of permutations $\sigma$ of length $n$ where $G_\sigma$ has $k_i$ cycles of length $i$ for every $i \in [n]$ is given by
\begin{equation*}
\frac{1}{\prod_{i=1}^n i^{k_i} k_i!}n!\,.
\end{equation*}
\end{theorem}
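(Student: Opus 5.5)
The plan is to obtain this as the special case $k=n$ of Theorem~\ref{main-cycle-count: mapping}. A permutation $\sigma\in S_n$ is exactly a mapping $f\in\mathcal{F}_n$ whose digraph $G_f$ has all $n$ vertices cyclic, i.e. $k=\sum_i ik_i=n$. Conversely, a mapping with $n$ cyclic points has every vertex on a cycle. Each vertex then has in-degree exactly one, so the mapping is a bijection. Hence the permutations of cycle type $(1^{k_1},2^{k_2},\dots)$ with $\sum_i ik_i=n$ are precisely the mappings counted by Theorem~\ref{main-cycle-count: mapping} with $k=n$.

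Substituting $k=n$ into that formula gives
\[
\frac{1}{\prod_{i=1}^n i^{k_i}k_i!}\binom{n}{n}n!\cdot n\,n^{-1}=\frac{n!}{\prod_{i=1}^n i^{k_i}k_i!}.
\]
The only point to check is the factor $k\,n^{n-k-1}$. At $k=n$ it reads $n\cdot n^{-1}=1$. This is the correct count of rooted-forest attachments when there are no tree vertices: exactly one way, the empty forest. So the formula stays valid at the boundary value even though $n^{n-k-1}$ has a negative exponent there.

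As an independent check, which I would also record, the same count follows from Lemma~\ref{lm:partition} with $M=[n]$ and $i=n$. Every $w\in S_{[n]}$ is a permutation, and $a([n],n)$ is the number of $n$-cycles, which is $(n-1)!$. Then $a([n],\lambda)=n\,(n-1)!/z_\lambda=n!/z_\lambda$, which matches the claimed expression since $z_\lambda=\prod_i i^{k_i}k_i!$. Alternatively, one could give the classical direct argument. Write the $n$ elements in one-line order and bracket them into cycles according to $\lambda$, with lengths nondecreasing. Each permutation then arises $z_\lambda$ times: $i$ rotations for each $i$-cycle and $k_i!$ orderings among the cycles of equal length $i$.

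The main obstacle is only the degenerate boundary case $k=n$ in the mapping formula. I would handle it by the forest-count interpretation above, or simply sidestep it via Lemma~\ref{lm:partition}.
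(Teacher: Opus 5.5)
Your proposal is correct and follows the paper's approach: the paper likewise treats this theorem as the special case $k=n$ of Theorem~\ref{main-cycle-count: mapping}, since permutations are exactly the mappings with $n$ cyclic points. Your check that the factor $k\,n^{n-k-1}$ equals $1$ at $k=n$ is sound, and so is the alternative derivation via Lemma~\ref{lm:partition} with $a([n],n)=(n-1)!$; both are harmless additions.
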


\begin{theorem}\label{main-cycle-count-special: permutation}
The number of permutations $\sigma$ of length $n$ for which $G_\sigma$ is connected (and thus has a cycle of length $n$) is given by
\begin{equation*}
A(n, n)=(n-1)!\,.
\end{equation*}
\end{theorem}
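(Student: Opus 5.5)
The plan is to read the statement off the multiset framework of Section~\ref{sec:multi} by specialising to $M=[n]$, so that $S_M=S_n$. For a permutation $\sigma$, every vertex of $G_\sigma$ has in-degree and out-degree exactly one. Hence $G_\sigma$ is a disjoint union of directed cycles with no hanging trees, and every point of $[n]$ is cyclic.

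\emph{Main route.} If $G_\sigma$ is connected, it must therefore be a single cycle through all $n$ vertices, so its cycle has length $n$. It remains to count cyclic permutations of $[n]$. I will write the cycle in one-line notation starting from the vertex $1$, as $(1,c_2,\ldots,c_n)$. The tail $(c_2,\ldots,c_n)$ is an arbitrary arrangement of $\{2,\ldots,n\}$, and distinct arrangements give distinct $\sigma$. This gives $(n-1)!$ cyclic permutations, which is $A(n,n)$ in the notation of Theorem~\ref{main-cycle-count-special: mapping}.

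\emph{Check via Corollary~\ref{cor:t(M,k)-and-a(M,k)}.} Take $M=[n]$ and $k=n$. Every permutation of $[n]$ has all $n$ entries distinct, so all $n$ positions are terminal closers and $t([n],n)=n!$. The corollary then gives $a([n],n)=n!/n=(n-1)!$.

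\emph{Check via Theorem~\ref{main-cycle-count: permutation}.} Take $k_n=1$ and all other $k_i=0$. Then $z_\lambda=n$ and the count is $n!/n=(n-1)!$. This agrees with Lemma~\ref{lm:partition}.

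No step is a genuine obstacle. The only point needing care is the first one: connectivity of $G_\sigma$ forces a single $n$-cycle only because in-degrees equal one, which rules out tree branches. This distinguishes the permutation case from the mapping case in Theorem~\ref{main-cycle-count-special: mapping}.
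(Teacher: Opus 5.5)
Your proof is correct, but your main route is not the one the paper takes. The paper gives no standalone argument. It notes that permutations are exactly the mappings with $n$ cyclic points, and reads the statement off as the $k=n$ case of the mapping formula $A(n,k)=k!\binom{n}{k}n^{n-k-1}$ of Theorem~\ref{main-cycle-count-special: mapping}, giving $n!\cdot n^{-1}=(n-1)!$. That formula is itself the single-cycle case of the Goulden--Jackson cycle-type count in Theorem~\ref{main-cycle-count: mapping}. Your third check, the $e_n$ case of the cycle-type formula with $z_\lambda=n$, is the same kind of specialization.

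Your main argument works differently. You use in-degree one to rule out hanging trees, conclude that a connected $G_\sigma$ is a single $n$-cycle, and count such cycles directly by anchoring them at the vertex $1$.

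The paper's route costs nothing once the general mapping count is granted, and it presents permutations as the extreme slice $k=n$ of the mapping family. However, it rests on an external result whose proof (counting rooted forests, with the factor $kn^{n-k-1}$ evaluated at its boundary $k=n$) is much heavier than the statement. Your count is self-contained, and it spells out the structural step that the paper compresses into the parenthetical ``and thus has a cycle of length $n$''.

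Your check via Corollary~\ref{cor:t(M,k)-and-a(M,k)}, using $t([n],n)=n!$, is the one closest to the paper's own multiset framework.
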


\begin{proposition}\label{pr:permutation-augmented}
The augmented cycle index for permutations satisfies
\[Z_n=\sum_{\sigma \in S_n} u_\sigma=n! h_n,\]
where $h_n$ is a complete symmetric function (the sum of all monomials of degree $n$).
\end{proposition}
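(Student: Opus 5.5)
Since $S_n = S_M$ for the multiset $M=[n]$, I will derive the statement as the special case $M=[n]$ of Proposition~\ref{pr:partition}. That proposition gives
\[
Z_n=\sum_{\sigma\in S_n}u_\sigma=\sum_{i=1}^n i\,a([n],i)\,h_i,
\]
where $a([n],i)$ counts permutations $\sigma$ of $[n]$ whose digraph $G_\sigma$ is connected and has a cycle of length $i$. The plan is to show that only the term $i=n$ survives and that its coefficient is $n!$.

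To see that the lower terms vanish, note that for a permutation every vertex of $G_\sigma$ has in-degree exactly one. Hence $G_\sigma$ has no hanging trees and every point is cyclic. If $G_\sigma$ is connected, it is therefore a single cycle through all $n$ vertices. So $a([n],i)=0$ for $i<n$.

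For the top term, $a([n],n)=A(n,n)=(n-1)!$ by Theorem~\ref{main-cycle-count-special: permutation}. This can also be checked directly, since the $n$-cycles in $S_n$ number $(n-1)!$. Substituting gives $Z_n = n\cdot (n-1)!\,h_n = n!\,h_n$.

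As a cross-check, the identity can be verified independently of Proposition~\ref{pr:partition}. By Theorem~\ref{main-cycle-count: permutation},
\[
Z_n=\sum_{\lambda\vdash n}\frac{n!}{z_\lambda}p_\lambda,
\]
and by Stanley's Corollary~7.7.6 this equals $n!\,h_n$. I expect no real obstacle; the only point needing care is justifying that connected permutation digraphs are single $n$-cycles, which the in-degree argument settles.
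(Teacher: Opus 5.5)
Your proof is correct and is essentially the paper's argument. The paper records this as a special case of the mapping and multiset results, since permutations form the single class $S_M$ with $M=[n]$, equivalently the mappings with $n$ cyclic points. Your specialization of Proposition~\ref{pr:partition} makes that explicit via $a([n],i)=0$ for $i<n$ and $a([n],n)=(n-1)!$, and your cross-check through Theorem~\ref{main-cycle-count: permutation} and $h_n=\sum_{\lambda\vdash n} z_\lambda^{-1}p_\lambda$ is the classical direct proof and is also valid.
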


\begin{example}
Take $n=3$. We compute that
\[Z_3=2p_3+3p_1p_2+p_1^3=6h_3.\]
\end{example}

\section{Parking functions}
\label{sec:parking}
Lemma \ref{lm:partition} and Proposition \ref{pr:partition} provide the core argument for the augmented cycle index decomposition for any union of multiset permutations; mappings and permutations only serve as special cases.

In this section, we study a combinatorial object that lies between mappings and permutations. This combinatorial object is commonly referred to as a parking function and was introduced by Konheim and Weiss \cite{Konheim1966} in their study of the hash storage structure.

In the classical parking
function scenario, we have $n$
parking spaces on a one-way street, labelled $1,2,\dots,n$ in
consecutive order as we drive down the street. There are $n$ cars
$C_1,\dots,C_n$. Each car $C_i$ has a preferred space $1\leq \pi_i\leq
n$. The cars drive down the street one at a time in order
$C_1,\dots, C_n$. The car $C_i$ drives immediately to space $\pi_i$ and
then parks in the first available space. Thus if $\pi_i$ is empty, then
$C_i$ parks there; otherwise $C_i$ next goes to space $\pi_i+1$, $\pi_i+2$, etc., until it finds an available space to park in (if no such space exists, then $C_i$ leaves the street unparked). If
all cars are able to park, then the sequence $\pi=(\pi_1,\dots, \pi_n)$ is
called a \emph{parking function} of length $n$.

Write $\PF_n$
for the set of parking functions of length $n$, so that $|\PF_n|=(n+1)^{n-1}$. An elegant, unpublished proof of this result using a circular symmetry argument was given by Pollak and is recounted in \cite{Foata1974} and \cite{Riordan1969}. Observe that every permutation is a parking function and every parking function is a mapping, so that $S_n \subseteq \PF_n \subseteq \mathcal{F}_n$. 

\begin{theorem}\label{main-cycle-count}
The number of parking functions $\pi$ of length $n$ where $G_\pi$ has $k_i$ cycles of length $i$ for every $i \in [n]$ is given by
\begin{equation*}
\begin{cases}
\frac{1}{\prod_{i=1}^n i^{k_i} k_i!} n! & \text{ if } k = n, \\
\frac{1}{\prod_{i=1}^n i^{k_i} k_i!}\binom{n+1}{k} k! \cdot k(n+1)^{n-k-2} & \text{ if } 1 \leq k < n,
\end{cases}
\end{equation*}
where $k=\sum_{i=1}^n k_i i$ is the number of cyclic points.
\end{theorem}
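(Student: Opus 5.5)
Parking functions are invariant under permutation, so $\PF_n$ is a disjoint union of sets $S_M$. My plan is to reduce the count by cycle type to a count by the number $k$ of cyclic points, then trade cyclic points for terminal closers using Theorem \ref{thm:cyclic-terminal}, and finally count parking functions with exactly $k$ terminal closers by Pollak's circular argument.

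\emph{Reduction to terminal closers.} Fix $M$ and a partition $\lambda\vdash k$. By Lemma \ref{lm:partition}, $a(M,\lambda)=k\,a(M,k)/z_\lambda$. By Corollary \ref{cor:t(M,k)-and-a(M,k)}, $k\,a(M,k)=t(M,k)$, so $a(M,\lambda)=t(M,k)/z_\lambda$. Summing over all $M$ with $S_M\subseteq \PF_n$, the number of parking functions of cycle type $\lambda$ equals $t_{\PF}(n,k)/z_\lambda$. Here $t_{\PF}(n,k)$ is the number of $\pi\in\PF_n$ with exactly $k$ terminal closers, i.e.\ $\pi_1,\dots,\pi_k$ are distinct and, if $k<n$, $\pi_{k+1}\in\{\pi_1,\dots,\pi_k\}$. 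Since $z_\lambda=\prod_i i^{k_i}k_i!$, it therefore suffices to show $t_{\PF}(n,n)=n!$ and $t_{\PF}(n,k)=\binom{n+1}{k}k!\cdot k(n+1)^{n-k-2}$ for $1\le k<n$.

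\emph{Counting by Pollak's argument.} Consider all sequences in $\mathbb{Z}_{n+1}^n$, with $n+1$ spots arranged on a circle. Adding a constant $c\in\mathbb{Z}_{n+1}$ to every entry partitions these sequences into classes of size $n+1$, and each class contains exactly one parking function. The defining condition for ``exactly $k$ terminal closers'' involves only equalities among entries, so it is invariant under this rotation. Hence $t_{\PF}(n,k)$ equals $1/(n+1)$ times the number of sequences in $\mathbb{Z}_{n+1}^n$ satisfying the condition.

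For $k<n$, that number is
\[(n+1)n\cdots(n-k+2)\cdot k\cdot (n+1)^{n-k-1}=\binom{n+1}{k}k!\cdot k\,(n+1)^{n-k-1},\]
counting in turn the $k$ distinct entries, the repeated entry $\pi_{k+1}$, and the free remaining entries. Dividing by $n+1$ gives the claim. For $k=n$ all entries are distinct, so the parking functions in question are exactly the permutations of $[n]$, and $t_{\PF}(n,n)=n!$. This case is also immediate from Theorem \ref{main-cycle-count: permutation}, since a parking function with $n$ cyclic points is a permutation.

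\emph{Main obstacle.} The delicate step is the reduction: all the counts $t(M,k)$ and $a(M,\lambda)$ must be summed over exactly the multisets $M$ that make up $\PF_n$. This works because membership in $\PF_n$ depends only on $M$, and both Lemma \ref{lm:partition} and Corollary \ref{cor:t(M,k)-and-a(M,k)} hold multiset by multiset. I would state this explicitly before summing. The rotation invariance in the Pollak step is then straightforward.
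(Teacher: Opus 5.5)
Your proposal is correct and follows the paper's proof step for step. You reduce the cycle-type count to $t(n,k)/z_\lambda$ via Lemma \ref{lm:partition} and Corollary \ref{cor:t(M,k)-and-a(M,k)} summed over the multisets making up $\PF_n$, treat $k=n$ as the permutation case, and count $t(n,k)$ for $k<n$ by Pollak's rotation argument on $n+1$ circular spots; your explicit remarks on rotation invariance and the multiset-by-multiset summation just make precise what the paper leaves implicit.
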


\begin{remark}
We note that a parking function $\pi$ of length $n$ has $n$ cyclic points if and only if the parking preferences constitute a permutation of $1, \dots, n$. Thus in this special case, our formula coincides with the number of permutations in $S_n$ of cycle type $(1^{k_1}, 2^{k_2}, \dots)$.
\end{remark}

Given $\pi=(\pi_1, \dots, \pi_n) \in \PF_n$, let $G_\pi$ denote the digraph on the vertex set $[n]$ with edges $i \rightarrow \pi_i$. Let $a(n,\lambda)$ be the number of $\pi\in \PF_n$ such that $G_\pi$ has cycle type 
$\lambda=(1^{k_1(\lambda)}, 2^{k_2(\lambda)}, \dots)$, i.e., $G_\pi$ has $k_i(\lambda)$ cycles of length $i$. Write $a(n,i)$ for $a(n,(i))$.

\begin{lemma}\label{lem:a(n,lambda)-and-a(n,i)}
For $\lambda\vdash i$, we have
\begin{equation*}\label{eq:identity}
a(n,\lambda) = i a(n,i)/z_\lambda,
\end{equation*}
where $z_\lambda=\prod_{j=1}^i j^{k_j} k_j!$ is as defined earlier.
\end{lemma}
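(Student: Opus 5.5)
The plan is to reduce the statement to Lemma~\ref{lm:partition}, using the fact that $\PF_n$ is invariant under permutation. If $\pi$ is a parking function, then any rearrangement of $(\pi_1,\dots,\pi_n)$ is again a parking function, because the parking condition depends only on the multiset of preferences. Concretely, $\pi \in \PF_n$ if and only if its weakly increasing rearrangement $\pi_{(1)}\le\cdots\le\pi_{(n)}$ satisfies $\pi_{(j)}\le j$ for every $j$. Hence $\PF_n$ is the disjoint union of the sets $S_M$, where $M$ ranges over the set $\mathcal{M}_n$ of $n$-element multisets over $[n]$ whose sorted sequence of elements satisfies this condition. I will state this characterization as a standard fact about parking functions, or justify it briefly: the order of arrival does not affect whether all cars park, since the set of occupied spaces after all cars arrive depends only on the preference multiset.

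Given this decomposition, the cycle-type counts split additively:
\[
a(n,\lambda)=\sum_{M\in\mathcal{M}_n} a(M,\lambda), \qquad a(n,i)=\sum_{M\in\mathcal{M}_n} a(M,i).
\]
This holds because the digraph $G_\pi$ and its cycle type depend only on the sequence $\pi$ itself, and each $\pi$ lies in exactly one $S_M$. Applying Lemma~\ref{lm:partition} to each summand gives $a(M,\lambda)=i\,a(M,i)/z_\lambda$. Since the factor $i/z_\lambda$ does not depend on $M$, summing over $M\in\mathcal{M}_n$ yields
\[
a(n,\lambda)=\frac{i}{z_\lambda}\sum_{M\in\mathcal{M}_n} a(M,i)=\frac{i\,a(n,i)}{z_\lambda}.
\]

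The only point that needs care is the permutation invariance of $\PF_n$; the rest is bookkeeping. As a check, the paper already notes that $\PF_n$ is permutation invariant, and Lemma~\ref{lm:partition} is stated for an arbitrary multiset $M$. The underlying operation, which re-splits one $i$-cycle into cycles of type $\lambda$ while keeping the trees attached, also preserves the multiset $M$. So no further compatibility condition needs to be verified.
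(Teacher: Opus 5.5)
Your proof is correct and follows the same route as the paper. The paper also uses the permutation invariance of $\PF_n$ to write it as a union of $S_M$'s and then applies Lemma~\ref{lm:partition} to each $S_M$, where the factor $i/z_\lambda$ does not depend on $M$. Your justification of invariance via the sorted criterion $\pi_{(j)}\le j$, and your explicit summation over $M$, fill in details the paper leaves implicit.
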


\begin{proof}
Note that parking functions are permutation invariant and thus $\PF_n$ is a union of certain $S_M$'s. The result then follows readily from Lemma \ref{lm:partition}.
\end{proof}

\begin{proof}[Proof of Theorem \ref{main-cycle-count}]
Let $M$ be a multiset for which $S_M \subseteq \PF_n$.
By Corollary~\ref{cor:t(M,k)-and-a(M,k)}, we have $t(M,k) = k a(M,k)$.
Thus, the number $t(n,k)$ of parking functions in $\PF_n$ with exactly $k$ terminal closers is $k a(n,i)$.
Lemma~\ref{lem:a(n,lambda)-and-a(n,i)} then implies that, for any partition $\lambda = (1^{k_1}, 2^{k_2}, \ldots)$ of $k$, we have 
\[
t(n,k) = z_\lambda a(n,\lambda),
\]
where $z_\lambda=\prod_{i=1}^k j^{k_i} k_i!$ is as defined earlier.

The case $k = n$ is precisely Theorem~\ref{main-cycle-count-special: permutation}, so we assume that $k < n$.
Using a variation of Pollak's circular symmetry argument \cite{Foata1974, Riordan1969}, we are able to count $t(n,k)$ relatively easily.
Consider $n$ cars $C_1, \ldots, C_n$ parking in spaces numbered $1, 2, \ldots, n+1$ on a circular one-way street.
Suppose that cars $C_1, \ldots, C_k$ have distinct preferences, and that car $C_{k+1}$ prefers one of these spots.
Cars $C_{k+2}, \ldots, C_n$ may have any preferences in $[n+1]$.
Of course, all cars will be able to park, and $C_1, \ldots, C_k$ will park in their desired spots.
Of the $n+1$ circular rotations of the labels of the parking spots, exactly one corresponds to a parking function $\pi$: the one which leaves spot $n+1$ open.
If $C_1, \ldots, C_k$ prefer spots $\pi_1, \ldots, \pi_k$ in $\pi$, then these are the terminal closers of $\pi$, and any set of $k$ terminal closers can be obtained in this manner.
Thus,
\[
t(n,k) = \frac{k! \binom{n+1}{k} k (n+1)^{n + 1 - k - 2} }{n+1} = k! \binom{n+1}{k} k (n+1)^{n - k - 2}.
\]
The statement follows.
\end{proof}

\begin{theorem}\label{main-cycle-count-special}
The number of parking functions $\pi$ of length $n$ for which $G_\pi$ is connected and has a cycle of length $k$ is given by
\begin{equation*}
a(n, k)=\begin{cases}
(n-1)! & \text{ if } k = n, \\
k! \binom{n+1}{k} (n+1)^{n-k-2} & \text{ if } 1 \leq k < n.
\end{cases}
\end{equation*}
\end{theorem}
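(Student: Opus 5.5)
The statement is the special case of Theorem~\ref{main-cycle-count} in which the cycle type consists of a single $k$-cycle. I would therefore take $\lambda = (k)$, i.e.\ $k_k = 1$ and $k_j = 0$ for all $j \neq k$, and read off the count, in the same way Theorem~\ref{main-cycle-count-special: mapping} was deduced from Theorem~\ref{main-cycle-count: mapping}. For this choice the normalizing factor is
\[
z_{(k)} = \prod_{j} j^{k_j} k_j! = k \cdot 1! = k .
\]

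\emph{Case $k<n$.} Theorem~\ref{main-cycle-count} gives
\[
a(n,k) = \frac{1}{k}\binom{n+1}{k} k!\cdot k\,(n+1)^{n-k-2} = k!\binom{n+1}{k}(n+1)^{n-k-2},
\]
which is the claimed formula. A parking function $\pi$ has $G_\pi$ connected with a cycle of length $k$ exactly when its cycle type is $(k)$, so this is precisely the quantity $a(n,k)=a(n,(k))$ counted in the theorem. Equivalently, in the proof of Theorem~\ref{main-cycle-count} we had $t(n,k)=k\,a(n,k)$ via Corollary~\ref{cor:t(M,k)-and-a(M,k)}, summed over the multisets $M$ with $S_M\subseteq \PF_n$. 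Dividing the displayed value of $t(n,k)$ by $k$ gives the same result.

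\emph{Case $k=n$.} A parking function with $n$ cyclic points is a permutation, by the remark after Theorem~\ref{main-cycle-count}. So $a(n,n)$ is the number of cyclic permutations of $[n]$, which is $(n-1)!$ by Theorem~\ref{main-cycle-count-special: permutation}. This also agrees with the $k=n$ branch of Theorem~\ref{main-cycle-count}: $n!/z_{(n)} = n!/n$.

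There is no real obstacle here. The only point needing care is that the formula for $1\le k<n$ does not extend to $k=n$: it would give $n!(n+1)^{-1}\cdot\binom{n+1}{n}=n!$, not $(n-1)!$. This is because the circular-rotation argument in the proof of Theorem~\ref{main-cycle-count} uses the existence of car $C_{k+1}$, so the two cases must be kept separate.
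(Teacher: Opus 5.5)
Your proposal is correct and is the paper's argument: take $(k_1,\dots,k_n)=e_k$ in Theorem~\ref{main-cycle-count}, so that $z_{(k)}=k$ cancels the extra factor $k$ (and $n!/n=(n-1)!$ when $k=n$). One small slip in your closing aside: putting $k=n$ into the $k<n$ formula gives $n!\binom{n+1}{n}(n+1)^{-2}=n!/(n+1)$, not $n!$, though your point that it differs from $(n-1)!$ still holds.
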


\begin{remark}
We note that the digraph $G_{\pi}$ associated to a parking function $\pi$ of length $n$ consists of a single cycle of length $n$ if and only if the parking preferences constitute a cycle of $1, \dots, n$. Thus in this special case, our formula coincides with the number of permutations in $S_n$ which form an $n$-cycle.
\end{remark}

\begin{remark}\label{rk:parking}
Let $c(n)=\sum_{k=1}^n a(n, k)$. Then $c(n)$ counts the number of parking functions $\pi$ of length $n$ for which $G_\pi$ is connected. The first few terms of $\{c(n)\}_{n \geq 1}$ are given by
\[1, 2, 9, 63, 600, 7249, 106344, 1837953,\]
which is not in the OEIS \cite{OEIS}.
\end{remark}

\begin{proof}[Proof of Theorem \ref{main-cycle-count-special}]
Theorem \ref{main-cycle-count-special} is a special case of Theorem \ref{main-cycle-count} where we take $(k_1, \dots, k_n)$ to be the elementary vector $e_k$ that is $1$ in the $k$th position and $0$ elsewhere.
\end{proof}

For $G_\pi$ with cycle type $\lambda=(1^{k_1(\lambda)}, 2^{k_2(\lambda)}, \dots)$, form a monomial \[u_\pi=p_1^{k_1} p_2^{k_2} \dots p_n^{k_n},\]
where $p_j$ is a power sum symmetric function. Define the augmented cycle index for parking functions
\begin{equation*}
Z_n=\sum_{\pi \in \PF_n} u_\pi,
\end{equation*}
where the sum is over all parking functions of length $n$, so $Z_n$ is an inhomogeneous symmetric function and $a(n,i)$ is the coefficient of $p_i$ in $Z_n$.

\begin{proposition}\label{pr:parking-augmented}
The augmented cycle index for parking functions satisfies
\[Z_n=\sum_{\pi \in \PF_n} u_\pi=\sum_{i=1}^n i a(n,i) h_i,\]
where $h_i$ is a complete symmetric function (the sum of all monomials of degree $i$).
\end{proposition}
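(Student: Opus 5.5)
The plan is to reduce Proposition~\ref{pr:parking-augmented} to Proposition~\ref{pr:partition}, in the same way that Proposition~\ref{pr:mapping-augmented} was deduced for mappings. The first step is to observe that $\PF_n$ is permutation invariant. Whether a sequence is a parking function depends only on its multiset of entries: the standard criterion is that the $i$th smallest entry is at most $i$ for every $i$. Hence $\PF_n = \bigsqcup_{M \in \mathcal{M}} S_M$, where $\mathcal{M}$ is the set of $n$-element multisets $M$ over $[n]$ with $S_M \subseteq \PF_n$. This union is disjoint, since distinct multisets give disjoint sets of sequences.

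The second step is to split the sum over this disjoint union:
\[
Z_n=\sum_{\pi\in\PF_n}u_\pi=\sum_{M\in\mathcal{M}}\sum_{w\in S_M}u_w=\sum_{M\in\mathcal{M}}Z_{S_M}.
\]
Applying Proposition~\ref{pr:partition} to each $M$ and interchanging the finite sums then gives
\[
Z_n=\sum_{i=1}^n i\Bigl(\sum_{M\in\mathcal{M}}a(M,i)\Bigr)h_i .
\]

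The last step is to identify the inner sum with $a(n,i)$. A parking function whose digraph is connected with a single $i$-cycle lies in exactly one $S_M$ with $M\in\mathcal{M}$. Counting such parking functions class by class therefore gives $\sum_{M\in\mathcal{M}}a(M,i)=a(n,i)$, which is exactly the claimed identity.

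No step here is a real obstacle. The only point needing care is the permutation invariance of $\PF_n$, and for that I would cite the sorted-entries characterization of parking functions.
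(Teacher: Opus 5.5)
Your proposal is correct and follows the same route as the paper, which likewise notes that $\PF_n$ is permutation invariant, hence a union of sets $S_M$, and then invokes Proposition~\ref{pr:partition}. You make explicit what the paper leaves implicit: the disjointness of the union, the interchange of sums, the identification $\sum_{M} a(M,i) = a(n,i)$, and the sorted-entries criterion as the justification for permutation invariance.
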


\begin{proof}
Note that parking functions are permutation invariant and thus $\PF_n$ is a union of certain $S_M$'s. The result then follows readily from Proposition \ref{pr:partition}.
\end{proof}

\begin{example}
Take $n=3$. We compute that
\[Z_3=4p_1 + 3p_2 + 2p_3 +3p_1^2 + 3p_2 p_1 + p_1^3=4h_1+6h_2+6h_3.\]
\end{example}

\begin{corollary}
\begin{equation*}
\sum_{i=1}^n i a(n,i) = (n+1)^{n-1}.
\end{equation*}
\end{corollary}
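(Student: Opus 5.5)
The plan is to mirror the two earlier corollaries (for $S_M$ and for $\mathcal{F}_n$): specialize the symmetric-function identity of Proposition~\ref{pr:parking-augmented} so that both sides collapse to plain counts.

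Set $x_1=1$ and $x_i=0$ for all $i\geq 2$. Under this specialization each power sum becomes $p_j=\sum_i x_i^j=1$. Likewise each complete symmetric function becomes $h_i=1$, since the only surviving monomial of degree $i$ is $x_1^i$. Consequently every monomial $u_\pi=p_1^{k_1}p_2^{k_2}\cdots p_n^{k_n}$ evaluates to $1$, whatever the cycle type of $G_\pi$.

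The left-hand side $\sum_{\pi\in\PF_n}u_\pi$ therefore specializes to $|\PF_n|$. By the classical count recalled at the start of this section (Pollak's argument), this equals $(n+1)^{n-1}$. The right-hand side $\sum_{i=1}^n i\,a(n,i)h_i$ specializes to $\sum_{i=1}^n i\,a(n,i)$. Equating the two sides gives the claim.

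There is no real obstacle. The only point needing care is that Proposition~\ref{pr:parking-augmented} is an identity of symmetric functions, so specializing variables is legitimate. Alternatively, one could argue directly from the corollary $\sum_i i\,a(M,i)=|S_M|$, summed over the multisets $M$ with $S_M\subseteq\PF_n$, since $\PF_n$ is a disjoint union of such $S_M$. A third route would be a direct check using the explicit formulas of Theorem~\ref{main-cycle-count-special}, but that is unnecessary.
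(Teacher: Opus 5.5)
Your proposal is correct and matches the paper's proof: specialize $x_1=1$ and $x_i=0$ for $i\ge 2$ in Proposition~\ref{pr:parking-augmented}, so that $p_j=h_i=u_\pi=1$; the left side then becomes $|\PF_n|=(n+1)^{n-1}$ and the right side becomes $\sum_i i\,a(n,i)$. Your alternative of summing $\sum_i i\,a(M,i)=|S_M|$ over the multisets $M$ whose permutation sets make up $\PF_n$ is also valid, and is really the same argument unpacked.
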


\begin{proof}
We take $x_1=1$ and $x_i=0$ for all $i\geq 2$, which yields that $p_i=h_i=1$ for all $i \geq 1$ and $u_\pi=1$ for all $\pi \in \PF_n$. By Proposition \ref{pr:parking-augmented}, the claimed identity is then immediate.
\end{proof}

\section{Prime parking functions}
\label{sec:prime_parking}
In this section, we focus on a subclass of parking functions called {\em prime parking functions}. A parking function $\pi=(\pi_1, \ldots, \pi_n)$ is a prime parking function if for all $1 \leq j \leq n-1$, at least $j+1$ cars have a parking preference in the first $j$ parking spots. Equivalently, $\pi$ is a prime parking function if it remains a parking function even after removing a coordinate that equals $1$. Let $\PPF_n$ be the set of prime parking functions of length $n$. Kalikow (see \cite[pages 141--142]{Stanley1999}) modified Pollak's circular symmetry argument to show that
    $|\PPF_n| = (n-1)^{n-1}$, so that $|\PPF_n| = |\mathcal{F}_{n-1}|$. 
Observe that $\PPF_n$ and $S_n$ are disjoint subsets of $\PF_n$, that is, $\PPF_n \subseteq \PF_n$ and $S_n \subseteq \PF_n$, but $\PPF_n \cap \ S_n =\emptyset$.

\begin{theorem}\label{main-cycle-count: prime}
The number of prime parking functions $\pi$ of length $n$ where $G_\pi$ has $k_i$ cycles of length $i$ for every $i \in [n-1]$ is given by
\begin{equation*}
\frac{1}{\prod_{i=1}^{n-1} i^{k_i} k_i!}\binom{n-1}{k} k! k(n-1)^{n-k-2},
\end{equation*}
where $k=\sum_{i=1}^{n-1} k_i i$ is the number of cyclic points with $1\leq k \leq n-1$.
\end{theorem}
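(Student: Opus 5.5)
The plan is to copy the proof of Theorem~\ref{main-cycle-count}, replacing Pollak's circular argument with Kalikow's. Prime parking functions are permutation invariant: the defining condition, that at least $j+1$ cars prefer the first $j$ spots, depends only on the multiset of preferences. So $\PPF_n$ is a disjoint union of sets $S_M$.

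\textbf{Step 1: reduce to counting terminal closers.} For each $S_M\subseteq\PPF_n$, Corollary~\ref{cor:t(M,k)-and-a(M,k)} gives $t(M,k)=k\,a(M,k)$, and Lemma~\ref{lm:partition} gives $a(M,\lambda)=k\,a(M,k)/z_\lambda$ for $\lambda\vdash k$. Summing over the relevant $M$, the number of prime parking functions with cycle type $\lambda$ equals $t'(n,k)/z_\lambda$. Here $t'(n,k)$ is the number of $\pi\in\PPF_n$ with exactly $k$ terminal closers. It therefore suffices to show
\[
t'(n,k)=k!\binom{n-1}{k}k\,(n-1)^{n-k-2}.
\]
First, the range of $k$. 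Every prime parking function has all values in $[n-1]$: the condition for $j=n-1$ forces all $n$ cars into the first $n-1$ spots. Hence the vertex $n$ has no incoming edge and is never cyclic, and $\pi$ cannot have $n$ distinct values, so $1\le k\le n-1$.

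\textbf{Step 2: Kalikow's circular argument.} Work in $(\mathbb{Z}/(n-1))^n$, i.e. $n$ cars on a circular one-way street with $n-1$ spots. Adding a constant $c$ modulo $n-1$ to every coordinate partitions this set into orbits of size $n-1$. Kalikow's argument (Stanley, pp.~141--142) shows that each orbit contains exactly one prime parking function, once residues are represented in $[n-1]$. I would re-derive or quote this carefully. It is the main obstacle, since I need the precise form in which exactly one rotation is prime. I would try the standard formulation: the rotation in which the last car, finding the circle full, would wrap past spot $n-1$ back to spot $1$.

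\textbf{Step 3: rotation preserves terminal closers.} Adding a constant modulo $n-1$ is injective on values. So it preserves which of $w_1,\dots,w_k$ are distinct and whether $w_{k+1}$ equals one of them. Hence the number of terminal closers is constant on orbits, and
\[
t'(n,k)=\frac{1}{n-1}\,\#\{w\in[n-1]^n : w \text{ has exactly } k \text{ terminal closers}\}.
\]

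\textbf{Step 4: count sequences.} A sequence with exactly $k$ terminal closers is built as follows:
\begin{itemize}
\item choose $w_1,\dots,w_k$ distinct in $(n-1)_k=k!\binom{n-1}{k}$ ways;
\item choose $w_{k+1}$ among these $k$ values;
\item choose the remaining $n-k-1$ coordinates freely, in $(n-1)^{n-k-1}$ ways.
\end{itemize}
Dividing by $n-1$ gives the displayed formula for $t'(n,k)$. In the edge case $k=n-1$ the power $(n-1)^{-1}$ appears, and the formula reads $(n-1)!$, which is still an integer and still matches the count. Dividing by $z_\lambda=\prod_i i^{k_i}k_i!$ then yields the statement.
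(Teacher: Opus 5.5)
Your proposal is correct and matches the paper's argument. You reduce to counting terminal closers via Corollary~\ref{cor:t(M,k)-and-a(M,k)} and Lemma~\ref{lm:partition}, then apply Kalikow's circular argument on $n-1$ spots and divide $k!\binom{n-1}{k}k(n-1)^{n-k-1}$ by $n-1$; your Step~3 simply makes explicit the rotation-invariance of the terminal-closer count that the paper uses tacitly.

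One caution on Step~2: your tentative description of the distinguished rotation (the last car wrapping past spot $n-1$) singles out nothing. In every rotation the first $n-1$ cars fill the circle and the $n$th car wraps around. Quoting Kalikow's fact is enough. If you want to re-derive it, apply the cycle lemma to $(N_1-1,\dots,N_{n-1}-1)$, where $N_s$ is the number of cars preferring spot $s$. This sequence sums to $1$, so exactly one cyclic shift has all partial sums positive. Equivalently, exactly one rotation satisfies $\#\{i:\pi_i\le j\}\ge j+1$ for all $j\in[n-1]$.
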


\begin{remark}
We note that a prime parking function $\pi$ of length $n$ cannot have $n$ cyclic points, since by definition $\pi$ must have at least two coordinates equalling $1$ and no coordinate equalling $n$.
\end{remark}

\begin{remark}
We note that the number of mappings $f$ of length $n-1$ where $G_f$ has $k_i$ cycles of length $i$ for every $i \in [n-1]$ coincides with the number of prime parking functions $\pi$ of length $n$ where $G_\pi$ has $k_i$ cycles of length $i$ for every $i \in [n-1]$.
\end{remark}

Recall that $G_\pi$ denotes the digraph on the vertex set $[n]$ with edges $i \rightarrow \pi_i$. Let $a'(n,\lambda)$ be the number of $\pi\in \PPF_n$ such that $G_\pi$ has cycle type 
$\lambda=(1^{k_1(\lambda)}, 2^{k_2(\lambda)}, \dots)$, i.e., $G_\pi$ has $k_i(\lambda)$ cycles of length $i$. Write $a'(n,i)$ for $a'(n,(i))$. 

\begin{lemma}\label{lem:a'(n,lambda)-and-a'(n,i)}
For $\lambda\vdash i$, we have
\begin{equation*}\label{eq:identity'}
a'(n,\lambda) = i a'(n,i)/z_\lambda,
\end{equation*}
where $z_\lambda=\prod_{j=1}^i j^{k_j} k_j!$ is as defined earlier.
\end{lemma}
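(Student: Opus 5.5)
The plan mirrors the proof of Lemma~\ref{lem:a(n,lambda)-and-a(n,i)}. We first check that $\PPF_n$ is invariant under permutation. The defining condition of a prime parking function is that, for every $1 \leq j \leq n-1$, at least $j+1$ of the coordinates $\pi_1, \ldots, \pi_n$ lie in $[j]$. This condition depends only on the multiset of entries of $\pi$, not on their order. Hence if $\pi \in \PPF_n$, every rearrangement of $\pi$ also lies in $\PPF_n$. It follows that $\PPF_n$ is a disjoint union of sets $S_M$, taken over those $n$-element multisets $M$ over $[n]$ whose permutations are prime parking functions.

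Next we apply Lemma~\ref{lm:partition} to each multiset $M$ in this family. For every such $M$ and every $\lambda \vdash i$ it gives $a(M,\lambda) = i\,a(M,i)/z_\lambda$.

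Finally we sum over the family. The sets $S_M$ are disjoint and cover $\PPF_n$, and the cycle type of $G_\pi$ is determined by $\pi$ itself. Therefore
\[
a'(n,\lambda) = \sum_M a(M,\lambda) \quad \text{and} \quad a'(n,i) = \sum_M a(M,i),
\]
where both sums run over the multisets in the family. Since $i/z_\lambda$ is a constant independent of $M$, summing the identity from Lemma~\ref{lm:partition} gives $a'(n,\lambda) = i\,a'(n,i)/z_\lambda$.

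The only step that needs any care is the permutation invariance, and it is immediate from the characterization of prime parking functions by counts of preferences. Everything else is linearity of the identity in Lemma~\ref{lm:partition}.
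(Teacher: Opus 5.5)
Your proposal is correct and matches the paper's proof: permutation invariance of $\PPF_n$ makes it a disjoint union of sets $S_M$, and summing the identity of Lemma~\ref{lm:partition} over these $M$ gives the result. You spell out two steps the paper leaves implicit, namely why the prime-parking condition depends only on the multiset of entries and why the constant factor $i/z_\lambda$ passes through the sum.
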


\begin{proof}
Note that prime parking functions are permutation invariant and thus $\PPF_n$ is a union of certain $S_M$'s. The result then follows readily from Lemma \ref{lm:partition}.
\end{proof}

\begin{proof}[Proof of Theorem \ref{main-cycle-count: prime}]
The proof proceeds similarly as the proof of Theorem \ref{main-cycle-count} for parking functions, except that our circular symmetry argument is now applied to a circle with $n-1$ spots instead of $n+1$ spots as in the classical case.
\end{proof}

\begin{theorem}\label{main-cycle-count-special: prime}
Take $1\leq k \leq n-1$. The number of prime parking functions $\pi$ of length $n$ for which $G_\pi$ is connected and has a cycle of length $k$ is given by
\begin{equation*}
a'(n, k)=k! \binom{n-1}{k} (n-1)^{n-k-2}.
\end{equation*}
\end{theorem}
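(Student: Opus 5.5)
This is the specialization of Theorem~\ref{main-cycle-count: prime} to a single cycle. The plan is to take $(k_1,\dots,k_{n-1})$ to be the elementary vector $e_k$, i.e.\ $k_k=1$ and all other $k_i=0$, exactly as in the proofs of Theorems~\ref{main-cycle-count-special: mapping} and~\ref{main-cycle-count-special}.

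With this choice the number of cyclic points is $\sum_i i k_i = k$. The normalizing factor is
\[
\prod_{i=1}^{n-1} i^{k_i}k_i! = k^1\cdot 1! = k.
\]
The count in Theorem~\ref{main-cycle-count: prime} therefore becomes
\[
\frac{1}{k}\binom{n-1}{k}k!\,k\,(n-1)^{n-k-2} = k!\binom{n-1}{k}(n-1)^{n-k-2}.
\]

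It remains to check that cycle type $e_k$ means exactly what the statement asks for. A single $k$-cycle means one cycle in $G_\pi$, and since every connected component of such a digraph contains exactly one cycle, $G_\pi$ is connected with a cycle of length $k$. The range $1\le k\le n-1$ agrees with the range in Theorem~\ref{main-cycle-count: prime}, so no separate case is needed. This matters because, by the preceding remark, prime parking functions never have $n$ cyclic points.

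The only real dependency is Theorem~\ref{main-cycle-count: prime} itself, whose proof was only sketched. If one wants more detail, it reduces to the count
\[
t'(n,k)=k!\binom{n-1}{k}k(n-1)^{n-k-2}
\]
of prime parking functions with $k$ terminal closers. This count comes from Kalikow's circular argument on $n-1$ spots: choose distinct preferences for the first $k$ cars, give car $k+1$ one of those $k$ preferences, let the remaining cars prefer anything, and divide by $n-1$ rotations. One then applies Corollary~\ref{cor:t(M,k)-and-a(M,k)} summed over the multisets $M$ with $S_M\subseteq\PPF_n$, which gives $a'(n,k)=t'(n,k)/k$ and reproduces the formula directly.

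The main obstacle in that fuller route is justifying the circular symmetry step for prime parking functions: one must check that exactly one rotation in each orbit is prime and that it preserves the terminal-closer set. I would take this from Kalikow's argument as cited in \cite{Stanley1999}.
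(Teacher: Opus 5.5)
Your proof is correct and matches the paper's: the paper also proves this by setting $(k_1,\dots,k_n)=e_k$ in Theorem~\ref{main-cycle-count: prime}, where the normalizing factor $\prod_i i^{k_i}k_i! = k$ cancels the extra factor $k$. Your sketched fuller route also matches how the paper handles Theorem~\ref{main-cycle-count: prime}, namely the circular argument on $n-1$ spots. One minor caveat there: rotation preserves the \emph{number} (and positions) of terminal closers rather than the terminal-closer set itself, and that is all the division by $n-1$ needs.
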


\begin{remark}\label{rk:prime_parking}
Let $c'(n)=\sum_{k=1}^{n-1} a'(n, k)$. Then $c'(n)$ counts the number of prime parking functions $\pi$ of length $n$ for which $G_\pi$ is connected. The first few terms of $\{c'(n)\}_{n \geq 1}$ are given by
\[0, 1, 3, 17, 142, 1569, 21576, 355081,\]
which by shifting one term is \cite[OEIS A001865]{OEIS}. We note that the number of mappings $f$ of length $n-1$ for which $G_f$ is connected coincides with the number of prime parking functions $\pi$ of length $n$ for which $G_\pi$ is connected.
\end{remark}

\begin{proof}[Proof of Theorem \ref{main-cycle-count-special: prime}]
Theorem \ref{main-cycle-count-special: prime} is a special case of Theorem \ref{main-cycle-count: prime} where we take $(k_1, \dots, k_n)$ to be the elementary vector $e_k$ that is $1$ in the $k$th position and $0$ elsewhere.
\end{proof}

For $G_\pi$ with cycle type $\lambda=(1^{k_1(\lambda)}, 2^{k_2(\lambda)}, \dots)$, form a monomial \[u_\pi=p_1^{k_1} p_2^{k_2} \dots p_n^{k_n},\]
where $p_j$ is a power sum symmetric function. Define the augmented cycle index for prime parking functions
\begin{equation*}
Z_n=\sum_{\pi \in \PPF_n} u_\pi,
\end{equation*}
where the sum is over all prime parking functions of length $n$, so $Z_n$ is an inhomogeneous symmetric function and $a(n,i)$ is the coefficient of $p_i$ in $Z_n$.

\begin{proposition}\label{pr:parking-augmented: prime}
The augmented cycle index for prime parking functions satisfies
\[Z_n=\sum_{\pi \in \PPF_n} u_\pi=\sum_{i=1}^n i a'(n,i) h_i,\]
where $h_i$ is a complete symmetric function (the sum of all monomials of degree $i$).
\end{proposition}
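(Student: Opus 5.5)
The plan is to reduce the statement to Proposition~\ref{pr:partition}, exactly as in the proofs of Propositions~\ref{pr:mapping-augmented} and~\ref{pr:parking-augmented}. The key observation is that $\PPF_n$ is invariant under permutation of coordinates: the defining condition (for every $1\le j\le n-1$, at least $j+1$ entries lie in $[j]$) depends only on the multiset of entries. Hence $\PPF_n$ is a disjoint union $\bigsqcup_{M\in\mathcal{M}} S_M$, where $\mathcal{M}$ is the set of $n$-element multisets over $[n]$ whose permutations are prime parking functions.

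We then sum the identity of Proposition~\ref{pr:partition} over $M\in\mathcal{M}$:
\[
Z_n=\sum_{\pi\in\PPF_n}u_\pi=\sum_{M\in\mathcal{M}}Z_{S_M}=\sum_{M\in\mathcal{M}}\sum_{i=1}^n i\,a(M,i)h_i=\sum_{i=1}^n i\Bigl(\sum_{M\in\mathcal{M}}a(M,i)\Bigr)h_i .
\]
It remains to identify the inner sum with $a'(n,i)$. Since $a(M,i)$ counts $w\in S_M$ whose digraph $G_w$ is connected with a single cycle of length $i$, and the $S_M$ partition $\PPF_n$, we get $\sum_{M\in\mathcal{M}}a(M,i)=a'(n,i)$ directly from the definition of $a'(n,i)=a'(n,(i))$. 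Equivalently, one can sum Lemma~\ref{lem:a'(n,lambda)-and-a'(n,i)} over $\lambda\vdash i$ and use $h_i=\sum_{\lambda\vdash i}z_\lambda^{-1}p_\lambda$ from \cite[Corollary 7.7.6]{Stanley1999}. This gives the claim.

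There is essentially no obstacle here. The only point needing care is the permutation invariance of $\PPF_n$, which is immediate from the counting definition. The term $i=n$ contributes zero, since $a'(n,n)=0$ by the remark that prime parking functions cannot have $n$ cyclic points, so the upper limit $n$ in the sum is harmless.
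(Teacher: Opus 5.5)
Your proof is correct and follows the paper's: permutation invariance of $\PPF_n$ makes it a disjoint union of classes $S_M$, and summing Proposition~\ref{pr:partition} over these classes, using $\sum_M a(M,i)=a'(n,i)$, gives the identity. Your closing aside that $a'(n,n)=0$ is not needed for the argument, and it only holds for $n\ge 2$, since under the stated definition $\PPF_1=\{(1)\}$.
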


\begin{proof}
Note that parking functions are permutation invariant and thus $\PPF_n$ is a union of certain $S_M$'s. The result then follows readily from Proposition \ref{pr:partition}.
\end{proof}

\begin{example}
Take $n=3$. We compute that
\[Z_3=2p_1 +p_2+p_1^2=2h_1+2h_2.\]
\end{example}

\begin{corollary}
\begin{equation*}
\sum_{i=1}^n i a'(n,i) = (n-1)^{n-1}.
\end{equation*}
\end{corollary}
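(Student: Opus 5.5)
The plan is to follow the template of the analogous corollaries for multiset permutations, mappings and parking functions: specialize Proposition \ref{pr:parking-augmented: prime} at $x_1=1$ and $x_i=0$ for all $i\ge 2$.

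With this specialization every power sum becomes $p_j = x_1^j = 1$. Every complete symmetric function becomes $h_i = 1$, since the only monomial of degree $i$ that survives is $x_1^i$. So each monomial $u_\pi = p_1^{k_1}p_2^{k_2}\cdots$ evaluates to $1$. The left side $Z_n=\sum_{\pi\in\PPF_n} u_\pi$ then becomes $|\PPF_n|$, and the right side becomes $\sum_{i=1}^n i\,a'(n,i)$.

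It remains to recall Kalikow's count $|\PPF_n|=(n-1)^{n-1}$, quoted at the start of Section \ref{sec:prime_parking}, which gives the claimed identity. The term $i=n$ contributes nothing, because $a'(n,n)=0$: a prime parking function cannot have $n$ cyclic points, by the remark following Theorem \ref{main-cycle-count: prime}.

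There is no real obstacle. The only point to check is that Proposition \ref{pr:parking-augmented: prime} applies, which needs $\PPF_n$ to be invariant under permutation and hence a union of sets $S_M$. This holds because the defining condition on prime parking functions (for each $j\le n-1$, at least $j+1$ cars prefer spots in $[j]$) depends only on the multiset of preferences.

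As an independent sanity check, one can sum Theorem \ref{main-cycle-count-special: prime} directly. Since $k\,a'(n,k)=k\cdot k!\binom{n-1}{k}(n-1)^{n-k-2}$ equals $k\,A(n-1,k)$ from Theorem \ref{main-cycle-count-special: mapping}, the sum is $\sum_{k} k\,A(n-1,k)=(n-1)^{n-1}$ by the mapping corollary. This confirms the result.
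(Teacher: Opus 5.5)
Your proposal is correct and matches the paper's proof: specialize the augmented cycle index of Proposition \ref{pr:parking-augmented: prime} at $x_1=1$, $x_i=0$ for $i\ge 2$, so that $p_i=h_i=u_\pi=1$, and then invoke Kalikow's count $|\PPF_n|=(n-1)^{n-1}$. Your extra check, using $k\,a'(n,k)=k\,A(n-1,k)$ and the mapping identity, is a valid independent confirmation, though the main argument does not need it.
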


\begin{proof}
We take $x_1=1$ and $x_i=0$ for all $i\geq 2$, which yields that $p_i=h_i=1$ for all $i \geq 1$ and $u_\pi=1$ for all $\pi \in \PPF_n$. By Proposition \ref{pr:parking-augmented: prime}, the claimed identity is then immediate.
\end{proof}

\section{Asymptotic analysis}
\label{sec:aa}
For convenience, we say that a multiset permutation (mapping, parking function, prime parking function, etc.) is \emph{indecomposable (connected)} if its digraph is connected, i.e., the digraph possesses exactly one component. Recall that $C(n)=\sum_{k=1}^n A(n,k)$ counts the total number of connected mappings (cf.~Remark~\ref{rk:mapping}). Katz \cite{Katz1955} established that the probability that a random mapping is indecomposable is asymptotically equal to $\sqrt{\pi/(2n)}$, that is, $C(n)/n^n \sim \sqrt{\pi/(2n)}$. R\'{e}nyi \cite{Renyi1959} further showed that the number of cyclic points (equivalently, the length of the unique cycle), scaled by $\sqrt{n}$, of an indecomposable random mapping converges in distribution to a folded normal distribution $|N(0,1)|$. See also related discussions in the paper of Mutafchiev and Finch \cite{Mutafchiev2024}.

A unifying theme in the probabilistic study of parking functions is the notion of the {\em equivalence of ensembles}.
Diaconis and Hicks \cite{Diaconis2017} stated that for certain statistics, it is natural to expect the distribution of statistics in the \emph{micro-canonical ensemble} $\PF_n$, to be close to the distribution of statistics in the \emph{canonical ensemble} $\mathcal{F}_n$. Indeed, they showed that the equivalence of ensembles holds for statistics such as the number of repeats, lucky cars, and descents. However, they also showed that it fails for some statistics such as the distribution of the value of the first coordinate.

In this section, we will demonstrate the asymptotic equivalence of ensembles between parking functions and mappings concerning the number of cyclic points in their associated connected digraphs, in the sense that both the probability of a random parking function being connected is asymptotically equal to $\sqrt{\pi/(2n)}$ as in \cite{Katz1955}, and that the number of cyclic points in a random connected parking function, scaled by $n^{-1/2}$, converges in distribution to $|N(0,1)|$ as in \cite{Renyi1959}. The same also holds true for connected prime parking functions. Finally, we will extend these techniques to connected regular multiset permutations. 

We note that this asymptotic equivalence of ensembles between parking functions and mappings concerning cyclic points does not come as a surprise. Indeed, parking functions and mappings behave similarly not just asymptotically but also combinatorially. Since parking functions and mappings are both unions of multiset permutations, a direct consequence of Theorem \ref{thm:cyclic-terminal} is that the number of parking functions/mappings of length $n$ with $k$ cyclic points respectively equals the number of parking functions/mappings of length $n$ with $k$ terminal closers. The number of parking functions of length $n$ with $k$ terminal closers was computed in the proof of Theorem \ref{main-cycle-count} using a variation of Pollak's circular symmetry argument and was shown to be $k! \binom{n+1}{k} k (n+1)^{n - k - 2}$. Following the definition for terminal closers, the number of mappings of length $n$ with $k$ terminal closers may be easily calculated as $k! \binom{n}{k} k n^{n-k-1}$ (see also Theorem \ref{main-cycle-count: mapping}, where we cited a related result from standard literature). We see that the two numbers and the way they are obtained display striking similarity.

We remark that the natural analogous question for the number of cyclic points in a uniformly random parking function has been answered in \cite{Paguyo2025}, where the authors showed that this number is asymptotically Rayleigh-distributed after rescaling by $n^{-1/2}$.

Throughout this section, we use $\Poi(\lambda)$ to denote a Poisson-distributed random variable with parameter $\lambda>0$, and $\NB(\ell,p)$ to denote a negative binomial random variable, i.e. the random variable counting the number of failures in a sequence of i.i.d. Bernoulli-trials with success probability $p$ before seeing the $\ell$-th success. We note here the very well-known central limit theorems
\[
 \frac{\Poi(n)-n}{\sqrt{n}} \dto N(0,1) \qquad \text{ and } \qquad \frac{\NB(\ell,p)-\ell(1-p)/p}{\sqrt{\ell(1-p)/p^2}} \dto N(0,1)
\]
for $n\to\infty$ and $\ell\to\infty$ with $p\in(0,1)$ constant, respectively. 
Both of these statements can easily be obtained after noting that $\Poi(n)$ is the sum of $n$ i.i.d. $\Poi(1)$ random variables, and $\NB(\ell,p)$ is the sum of $\ell$ i.i.d. geometric random variables with parameter $p$.

\subsection{Connected parking functions}\label{subsec:cpf}
Recall that $c(n)=\sum_{k=1}^n a(n,k)$ counts the total number of connected parking functions (cf. Remark \ref{rk:parking}). 

\begin{lemma}\label{lm:parking}
 We have $c(n) \sim \sqrt{\pi/2} \cdot e n^{n-3/2}$, so that $c(n)/(n+1)^{n-1} \sim \sqrt{\pi/(2n)}$. 
\end{lemma}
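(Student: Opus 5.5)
We start from Theorem~\ref{main-cycle-count-special}, which gives
\[
c(n)=(n-1)!+\sum_{k=1}^{n-1} k!\binom{n+1}{k}(n+1)^{n-k-2}.
\]
The term $(n-1)!$ is of order $n^{n-1/2}e^{-n}$, which is exponentially negligible compared with $n^{n-3/2}$, so it can be discarded.

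For the sum, write $k!\binom{n+1}{k}=(n+1)!/(n+1-k)!$. Substituting $j=n+1-k$, which runs from $2$ to $n$, we get
\[
\sum_{k=1}^{n-1} k!\binom{n+1}{k}(n+1)^{n-k-2}=(n+1)!\,(n+1)^{-3}\sum_{j=2}^{n}\frac{(n+1)^{j}}{j!}.
\]
This is the route suggested by the Poisson remark at the start of Section~\ref{sec:aa}. With $N=n+1$, the inner sum is $e^{N}\,\IP(2\le \Poi(N)\le N-1)$. By the central limit theorem for $\Poi(N)$, this probability tends to $1/2$, since the lower cutoff at $2$ is irrelevant and the upper cutoff sits at the mean. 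Hence
\[
c(n)\sim \tfrac12\,(n+1)!\,(n+1)^{-3}e^{n+1}.
\]

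Next we apply Stirling's formula, $(n+1)!\sim\sqrt{2\pi(n+1)}\,(n+1)^{n+1}e^{-(n+1)}$. This gives
\[
c(n)\sim \tfrac12\sqrt{2\pi}\,(n+1)^{n-3/2}=\sqrt{\pi/2}\,(n+1)^{n-3/2}.
\]
Using $(n+1)^{n-3/2}\sim e\,n^{n-3/2}$, this becomes $c(n)\sim\sqrt{\pi/2}\cdot e\,n^{n-3/2}$, which is the first claim.

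For the second claim, divide by $(n+1)^{n-1}\sim e\,n^{n-1}$. The factors of $e$ cancel, leaving $c(n)/(n+1)^{n-1}\sim\sqrt{\pi/2}\,n^{-1/2}=\sqrt{\pi/(2n)}$.

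The main obstacle is bookkeeping rather than anything conceptual. The index shift must be done correctly, and the endpoint terms must be handled: the missing $j=0,1$ terms and the $j=n+1$ term each contribute only $O(e^{-N}N^{N}/N!)=O(N^{-1/2})$ to the Poisson probability. The central limit theorem, or a direct local estimate of the Poisson mass near its mean, gives the limit $1/2$.
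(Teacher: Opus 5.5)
Your proposal is correct and follows the paper's proof essentially step for step. You use the same reindexing $j=n+1-k$ to turn $\sum_{k<n}a(n,k)$ into $\frac{(n+1)!}{(n+1)^3}e^{n+1}\IP(2\le\Poi(n+1)\le n)$, the central limit theorem for the limit $1/2$, Stirling's formula, and the fact that the $k=n$ term $(n-1)!$ is exponentially negligible, which the paper checks by an explicit Stirling computation of $a(n,n)/S_n$.
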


\begin{proof}
 We first consider $S_n=\sum_{k=1}^{n-1} a(n,k)$. By rewriting the above expression for $a(n,k)$, we obtain
 \begin{align}\label{eq:cnsetup}
  S_n 
  &= \frac{(n+1)!}{(n+1)^3} \sum_{k=1}^{n-1} \frac{(n+1)^{n+1-k}}{(n+1-k)!}
   = \frac{(n+1)!}{(n+1)^3} \sum_{j=2}^n \frac{(n+1)^j}{j!}\notag\\
  &= \frac{(n+1)!}{(n+1)^3} e^{n+1} \IP\big(2\leq \Poi(n+1)\leq n\big).
 \end{align}
 By the central limit theorem, the probability converges to $1/2$; combining this with Stirling's formula for $(n+1)!$ yields 
 \[
  S_n
  \sim \frac{\sqrt{2\pi}}{2} (n+1)^{n+3/2-3}, 
 \]
 which simplifies to the desired expression. It remains to show that $a(n,n)=o(S_n)$. For this, note that
 \[
  \frac{a(n,n)}{S_n}
  \sim \frac{\sqrt{2\pi}(n-1)^{n-1/2} e^{-(n-1)}}{\sqrt{\pi/2}(n+1)^{n-3/2}} 
  = \frac{2(n-1)}{e^{n-1}} \left(1-\frac{2}{n+1}\right)^{n-3/2}
  \sim \frac{2(n-1)}{e^{n+1}} \to 0,
 \]
 therefore $c(n)\sim S_n$, as required. 
\end{proof}

\begin{remark}
 The sum in \eqref{eq:cnsetup} is well-studied: the Ramanujan $Q$-function is defined by
 $Q(n) = \frac{n!}{n^n} \sum_{k=0}^{n-1} \frac{n^k}{k!}$, which can also be written as $\frac{n! e^n}{n^n} \IP\big( \Poi(n)\leq n-1\big)$. We have $S_n = (n+1)^{n-2}Q(n+1)
 - \frac{(n+2)!}{(n+1)^3}$, and it is known that
   \[Q(n) = \sqrt{\frac{\pi n}{2}} - \frac13 + \frac1{12} \sqrt{\frac{\pi}{2n}} - \frac{4}{135n} + O(n^{-3/2}).\]
The Ramanujan $Q$-function is a special case of the more general $Q_r$-function, where
\[Q_r(m, n)=\binom{r}{0}+\binom{r+1}{1} \frac{n}{m}+\binom{r+2}{2} \frac{n(n-1)}{m^2}+\cdots\]
is a generalized hypergeometric function of the second kind, and $Q(n)=Q_0(n, n-1)$. See \cite{Flajolet1995} and \cite{Flajolet1998}.
\end{remark}

Building upon Lemma \ref{lm:parking}, we can now determine the limiting distribution of $L_n$, the number of cyclic points in a uniform random connected parking function.

\begin{theorem}\label{thm:parking}
 We have $n^{-1/2}L_n\dto |N(0,1)|$ as $n\to\infty$. 
\end{theorem}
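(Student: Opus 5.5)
We work directly with the exact counts. By Theorem~\ref{main-cycle-count-special}, $\IP(L_n=k)=a(n,k)/c(n)$, and Lemma~\ref{lm:parking} gives the asymptotics of $c(n)$. The case $k=n$ contributes $a(n,n)/c(n)\to 0$, as shown in that lemma, so it can be ignored. For $1\le k<n$ we write, as in \eqref{eq:cnsetup},
\[
a(n,k)=\frac{(n+1)!}{(n+1)^3}\cdot\frac{(n+1)^{j}}{j!},\qquad j=n+1-k .
\]
Since $S_n=\sum_{k<n}a(n,k)$ equals $\frac{(n+1)!}{(n+1)^3}e^{n+1}\IP(2\le \Poi(n+1)\le n)$, the conditional law of $L_n$ given $L_n<n$ is exactly the law of $n+1-X$, where $X\sim\Poi(n+1)$ is conditioned on $2\le X\le n$. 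The key structural observation is therefore that $L_n$ is, up to an asymptotically negligible event, $n+1-X$ with $X$ Poisson conditioned to lie below its mean.

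Fix $x>0$. We compute
\[
\IP(L_n\le x\sqrt n)=\frac{a(n,n)\mathbf 1[x\sqrt n\ge n]}{c(n)}+\frac{S_n}{c(n)}\cdot\frac{\IP\big(n+1-x\sqrt n\le X\le n\big)}{\IP(2\le X\le n)} .
\]
The first term tends to $0$, and $S_n/c(n)\to1$. For the ratio we use the CLT for $\Poi(n+1)$. The numerator is $\IP\big((X-(n+1))/\sqrt{n+1}\in[-x\sqrt{n/(n+1)},\,-1/\sqrt{n+1}]\big)$, which tends to $\Phi(0)-\Phi(-x)$. Here we use uniform convergence of distribution functions (P\'olya's theorem), since the limit $\Phi$ is continuous and the endpoints move slightly with $n$. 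The denominator tends to $1/2$, because $\IP(X\le1)\to0$. Hence
\[
\IP(L_n\le x\sqrt n)\to 2\big(\Phi(x)-\tfrac12\big)=\IP(|N(0,1)|\le x),
\]
and since $L_n\ge1>0$ the case $x\le0$ is trivial. This proves $n^{-1/2}L_n\dto|N(0,1)|$.

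The only delicate point is this endpoint handling in the CLT step, namely the shifts $n+1$ versus $n$ and the $-1/\sqrt{n+1}$ endpoint. Pólya's uniform convergence disposes of it cleanly, so we expect no serious obstacle. Everything else has already been done in Lemma~\ref{lm:parking}.
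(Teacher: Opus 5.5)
Your proposal is correct and is essentially the paper's proof: both write $\sum_{k\le x\sqrt n} a(n,k)$ as $\frac{(n+1)!}{(n+1)^3}e^{n+1}\,\IP\big(n+1-x\sqrt n\le \Poi(n+1)\le n\big)$ and combine the Poisson central limit theorem with $c(n)\sim \frac{(n+1)!}{(n+1)^3}e^{n+1}/2$. Your conditioning formulation, your explicit treatment of the $k=n$ term, and your use of P\'olya's theorem for the moving endpoints only make rigorous steps that the paper leaves implicit.
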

\begin{proof}
 Fix $x>0$ and set $R=R_n=\floor{x\sqrt{n}}$. Consider $n$ large enough so that $R<n$. Similarly to \eqref{eq:cnsetup}, we compute 
 \begin{align}\label{eq:foldedproof}
  \IP(L_n\leq R)
  &= \frac{1}{c(n)} \sum_{k=1}^R a(n,k)
   = \frac{1}{c(n)} \frac{(n+1)!}{(n+1)^3} e^{n+1} \IP\big( n+1-R \leq \Poi(n+1)\leq n \big)\notag\\
  &\sim 2 \int_{-\frac{R}{\sqrt{n+1}}}^{-\frac{1}{\sqrt{n+1}}} \frac{1}{\sqrt{2\pi}} e^{-y^2/2} \Di y
   \sim \int_{-x}^x \frac{1}{\sqrt{2\pi}}  e^{-y^2/2} \Di y,
 \end{align}
 where we again relied on the central limit theorem, using $c(n)\sim \frac{(n+1)!}{(n+1)^3} e^{n+1}/2$ as a consequence of \eqref{eq:cnsetup}. Noting that the left-hand side of \eqref{eq:foldedproof} equals $\IP(n^{-1/2}L_n \leq x)$ while the right-hand side is the cumulative distribution function of $|N(0,1)|$ concludes the proof. 
\end{proof}

Recall that $c'(n)=\sum_{k=1}^{n-1} a'(n,k)$ counts the total number of connected prime parking functions (cf. Remark \ref{rk:prime_parking}). Let $L'_n$ be the number of cyclic points in a uniform random connected prime parking function.

\begin{proposition}
 We have $c'(n)\sim \sqrt{\pi/2} e^{-1} n^{n-3/2}$ (so that $c'(n)/(n-1)^{n-1} \sim \sqrt{\pi/(2n)}$) and $n^{-1/2}L'_n\dto |N(0,1)|$.
\end{proposition}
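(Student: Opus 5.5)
We follow the proofs of Lemma~\ref{lm:parking} and Theorem~\ref{thm:parking} essentially verbatim, now starting from the formula of Theorem~\ref{main-cycle-count-special: prime}, $a'(n,k)=k!\binom{n-1}{k}(n-1)^{n-k-2}$ for $1\le k\le n-1$. There is no exceptional case $k=n$ here, so there is no analogue of the $a(n,n)$ correction term to handle separately.

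Write $m=n-1$. Rewriting in the same way as in \eqref{eq:cnsetup}, we have $a'(n,k)=\frac{m!}{(m-k)!}m^{m-k-1}=\frac{m!}{m^{2}}\cdot\frac{m^{m-k+1}}{(m-k)!}$. Substituting $j=m-k\in\{0,\dots,m-1\}$ gives
\[
c'(n)=\frac{m!}{m}\sum_{j=0}^{m-1}\frac{m^{j}}{j!}=\frac{m!}{m}\,e^{m}\,\IP\big(\Poi(m)\le m-1\big).
\]
This is exactly $m^{m-1}Q(m)$ in terms of the Ramanujan $Q$-function from the remark above. The central limit theorem gives $\IP(\Poi(m)\le m-1)\to 1/2$. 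Stirling's formula $m!\sim\sqrt{2\pi m}\,m^m e^{-m}$ then yields
\[
c'(n)\sim \tfrac12\sqrt{2\pi}\,m^{m-1/2}=\sqrt{\pi/2}\,(n-1)^{n-3/2}.
\]
Since $(n-1)^{n-3/2}=n^{n-3/2}(1-1/n)^{n-3/2}\sim e^{-1}n^{n-3/2}$, we obtain $c'(n)\sim\sqrt{\pi/2}\,e^{-1}n^{n-3/2}$. Dividing by $(n-1)^{n-1}$ gives $c'(n)/(n-1)^{n-1}\sim\sqrt{\pi/(2(n-1))}\sim\sqrt{\pi/(2n)}$.

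For the distributional statement, fix $x>0$, set $R=\floor{x\sqrt n}$, and take $n$ large so that $R<n-1$. Then
\[
\IP(L'_n\le R)=\frac{1}{c'(n)}\sum_{k=1}^R a'(n,k)=\frac{\IP\big(m-R\le \Poi(m)\le m-1\big)}{\IP\big(\Poi(m)\le m-1\big)}.
\]
The bounds are $m-R$ and $m-1$ because $j$ ranges over $m-R\le j\le m-1$. By the central limit theorem, the numerator tends to $\int_{-x}^{0}\frac{1}{\sqrt{2\pi}}e^{-y^2/2}\Di y$, since $R/\sqrt m\to x$ and $1/\sqrt m\to 0$. The denominator tends to $1/2$. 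The ratio is therefore $\int_{-x}^{x}\frac{1}{\sqrt{2\pi}}e^{-y^2/2}\Di y$, which is the distribution function of $|N(0,1)|$ at $x$. This gives $n^{-1/2}L'_n\dto|N(0,1)|$.

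The main point needing care is justifying the CLT limit for an interval whose endpoints depend on $m$, namely $m-R$ and $m-1$. This follows because the CLT convergence of distribution functions is uniform (Pólya's theorem, as the limit is continuous), so endpoints of the form $m+t_m\sqrt m$ with $t_m\to t$ are harmless. Everything else is the routine Stirling bookkeeping above, including the factor $e^{-1}$ coming from $(1-1/n)^{n}$.
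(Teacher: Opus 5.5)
Your proposal is correct and follows the paper's proof essentially step for step. It uses the same rewriting $c'(n)=(n-2)!\,e^{n-1}\,\IP(\Poi(n-1)\le n-2)$, the same central limit theorem and Stirling asymptotics, and the same ratio of Poisson probabilities over the window $[n-1-R,\,n-2]$ for the distributional limit; your appeal to P\'olya's theorem for the $n$-dependent endpoints is a small bit of rigor the paper leaves implicit.
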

\begin{proof}
 The proof works analogously to those of Lemma \ref{lm:parking} and Theorem \ref{thm:parking}. Indeed, imitating the approach of \eqref{eq:cnsetup}, together with the central limit theorem and a Stirling approximation yields
 \begin{align*}
  c'(n)
  &= (n-2)!\sum_{k=1}^{n-1} \frac{(n-1)^{n-k-1}}{(n-k-1)!} = (n-2)! e^{n-1} \IP(0\leq \Poi(n-1)\leq n-2)\\
  &\sim \frac{1}{2} \sqrt{2\pi} (n-1)^{n-1+1/2 -1},
 \end{align*}
 which simplifies to the desired asymptotics for $c'(n)$. For the second part, we again fix $x>0$, set $R=\floor{x\sqrt{n}}$ and compute, as before,
 \begin{align*}
  \IP(L'_n \leq R)
  &= \frac{1}{c'(n)} \sum_{k=1}^R a'(n,k)
   = \frac{(n-2)!}{c'(n)} e^{n-1} \IP(n-1-R\leq \Poi(n-1)\leq n-2) \\
  &\sim 2\int_{-\frac{R}{\sqrt{n-1}}}^{-\frac{1}{\sqrt{n-1}}} \frac{1}{\sqrt{2\pi}}  e^{-y^2/2} \Di y
   \sim \int_{-x}^x \frac{1}{\sqrt{2\pi}}  e^{-y^2/2} \Di y. \qedhere
 \end{align*}
\end{proof}

\subsection{Connected regular multiset permutations}

Fix $r\geq 2$ and consider permutations of the \emph{$r$-regular multiset} $M=(1^{r},2^{r},\dots,j^{r})$. Imitating the calculations in Example \ref{ex}, we find that
\[
 t(M,k) = \binom{j}{k} k! \cdot k\cdot \frac{(jr-k-1)!}{r!^{j-k}(r-1)!^{k-1}(r-2)!},
\]
which reduces to the special case in Example \ref{ex} when $j=k$.
Indeed, we first choose $k$ distinct elements from $[j]$ to be our terminal closers. They can appear in any order among the first $k$ elements of the permutation. The $(k+1)$-th element needs to repeat one of these $k$ values. The remaining $jr-k-1$ positions are filled with the remaining elements from the multiset. By Corollary \ref{cor:t(M,k)-and-a(M,k)}, we thus have
\begin{equation*}
a(M,k)
 = \binom{j}{k} k! \frac{(jr-k-1)!}{r!^{j-k}(r-1)!^{k-1}(r-2)!}
 = \frac{r-1}{r!^{j}} \binom{j}{k} k! r^k (jr-k-1)!.
\end{equation*}
Let $c_M(j) := \sum_{k=1}^j a(M,k)$ be the total number of connected multiset permutations.

\begin{theorem}\label{thm:regM}
 For $r\geq 2$ fixed and $M$ as above, we have
 \begin{equation}\label{eq:thmregM1}
  c_M(j)
  \sim j^{1/2} \frac{(jr-1)!}{r!^{j}} \sqrt{\frac{\pi r(r-1)}{2}}
 \end{equation}
 for $j\to\infty$. Moreover, let $L_j$ denote the number of cyclic points in a uniform random connected multiset permutation of $M$. Then
 \begin{equation*}
  j^{-1/2} L_j \dto \left|N\left(0,\frac{r}{r-1}\right)\right|.
 \end{equation*}
\end{theorem}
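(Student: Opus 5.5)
The plan is to mimic the proofs of Lemma~\ref{lm:parking} and Theorem~\ref{thm:parking}, replacing the Poisson distribution by a negative binomial one. I start from the closed form
\[
a(M,k)=\frac{r-1}{r!^{j}}\,\frac{j!}{(j-k)!}\,r^k\,(jr-k-1)!
\]
derived just before the theorem. I rewrite it in terms of $m:=j-k$, which ranges over $\{0,\dots,j-1\}$ as $k$ ranges over $[j]$. The key observation is the identity
\[
\frac{(jr-k-1)!}{(j-k)!}=\binom{jr-k-1}{j-k}\,\bigl((r-1)j-1\bigr)!.
\]
Together with $m+(r-1)j-1=jr-k-1$, it makes the binomial coefficient $\binom{m+\ell-1}{m}$ with $\ell=(r-1)j$ appear. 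Setting $p=(r-1)/r$, so that $1-p=1/r$, and writing $r^k=r^{j}\cdot r^{-m}$, I multiply and divide by $p^{\ell}$ to obtain
\[
a(M,k)=C_j\,\IP\bigl(\NB((r-1)j,\tfrac{r-1}{r})=j-k\bigr),\qquad
C_j=\frac{(r-1)\,j!\,\bigl((r-1)j-1\bigr)!\,r^{j}}{r!^{j}}\Bigl(\frac{r}{r-1}\Bigr)^{(r-1)j}.
\]

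With this representation, $c_M(j)=C_j\,\IP\bigl(0\le \NB((r-1)j,\tfrac{r-1}{r})\le j-1\bigr)$. The negative binomial here has mean $\ell(1-p)/p=j$ and variance $\ell(1-p)/p^2=jr/(r-1)$. Since $\ell=(r-1)j\to\infty$ with $p$ fixed, the central limit theorem quoted at the start of Section~\ref{sec:aa} applies. It gives that the probability above tends to $1/2$, because the event $\{\NB\le j-1\}$ is one standard deviation scale away from the mean only by $O(j^{-1/2})$ and the lower cutoff at $0$ is negligible. Hence $c_M(j)\sim C_j/2$.

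It remains to check that $C_j/2$ matches the right-hand side of \eqref{eq:thmregM1}. I would apply Stirling's formula to $j!$, to $((r-1)j-1)!$ and to $(jr-1)!$, writing $((r-1)j-1)!=((r-1)j)!/((r-1)j)$ and similarly for $(jr-1)!$. The exponential factors $e^{-j}e^{-(r-1)j}$ and $e^{-rj}$ cancel. The powers $j^{j}((r-1)j)^{(r-1)j}$ against $(rj)^{rj}$ leave exactly $(r-1)^{(r-1)j}r^{-rj}$, which cancels with $r^j(r/(r-1))^{(r-1)j}$. What survives are the square-root factors and the factor $(r-1)\cdot rj/((r-1)j)$; these should assemble into $j^{1/2}\sqrt{\pi r(r-1)/2}$ after dividing by $2$. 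This bookkeeping is the step I expect to be the most error-prone, though it is routine. The factor $(r-1)/(r-1)$ mismatch and the $\sqrt{2\pi}$ constants must be tracked carefully.

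For the distributional statement, I fix $x>0$ and set $R=\floor{x\sqrt{j}}$. The event $\{L_j\le R\}$ corresponds to $\{j-R\le \NB\le j-1\}$, so
\[
\IP(L_j\le R)=\frac{C_j}{c_M(j)}\,\IP\bigl(j-R\le \NB((r-1)j,\tfrac{r-1}{r})\le j-1\bigr).
\]
Using $c_M(j)\sim C_j/2$ and the central limit theorem with standard deviation $\sqrt{jr/(r-1)}$, the right-hand side tends to
\[
2\,\IP\bigl(-x\sqrt{(r-1)/r}\le N(0,1)\le 0\bigr),
\]
which is the distribution function of $|N(0,r/(r-1))|$ at $x$. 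This proves $j^{-1/2}L_j\dto|N(0,r/(r-1))|$.
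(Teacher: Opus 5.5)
Your proposal is correct and follows the paper's proof essentially step by step. You use the same representation $c_M(j)=C_j\,\IP\bigl(\NB((r-1)j,\tfrac{r-1}{r})\le j-1\bigr)$, and your $C_j$ equals the paper's prefactor $r^{jr}j!(jr-j-1)!/\bigl((r-1)^{jr-j-1}r!^{j}\bigr)$; then come the CLT for the factor $1/2$, Stirling, and the window $\{j-R\le \NB\le j-1\}$ for the limit law, as in the paper. The Stirling bookkeeping you left unfinished does close: the surviving factor is $r\sqrt{2\pi j(r-1)/r}=\sqrt{2\pi r(r-1)j}$, which after halving gives exactly $j^{1/2}\sqrt{\pi r(r-1)/2}$.
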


\begin{proof}
We argue as in Section \ref{subsec:cpf}, just with a negative binomial distribution instead of Poisson. We have
\begin{align*}
    c_M(j) &= \sum_{k=1}^j a(M,k)
    = \sum_{k=1}^j \frac{r-1}{r!^{j}} \binom{j}{k} k! r^k (jr-k-1)! \\
    &= \frac{r-1}{r!^{j}} j! \sum_{k=1}^j  \frac{r^k (jr-k-1)!}{(j-k)!}
    = \frac{r-1}{r!^{j}} j! \sum_{h=0}^{j-1}  \frac{r^{j-h} (jr-j+h-1)!}{h!} \\
    &= \frac{(r-1)r^j j!(jr-j-1)!}{r!^{j}}  \sum_{h=0}^{j-1} \binom{jr-j+h-1}{h} r^{-h} \\
    &= \frac{r^{jr} j!(jr-j-1)!}{(r-1)^{jr-j-1}r!^{j}}  \sum_{h=0}^{j-1} \binom{jr-j+h-1}{h} (1-r^{-1})^{jr-j} r^{-h} \\
    &= \frac{r^{jr} j!(jr-j-1)!}{(r-1)^{jr-j-1}r!^{j}} 
    \IP\big(\NB(jr-j,1-r^{-1})\leq j-1\big).
\end{align*}
The probability $\IP(\cdot)$ tends to $1/2$ as in the previous instances. Applying Stirling's formula, one gets
\begin{align*}
 \frac{j!(jr-j-1)!}{(jr-1)!} 
 &\sim \sqrt{\frac{2\pi j(jr-j-1)}{jr-1}} \cdot \frac{j^{j+jr-j-1}}{j^{jr-1}} \frac{\left(r-1-\frac{1}{j}\right)^{jr-j-1}}{\left(r-\frac{1}{j}\right)^{jr-1}}\\
 &\sim j^{1/2} \sqrt{\frac{2\pi (r-1)}{r}} \frac{(r-1)^{jr-j-1} e^{-1}}{r^{jr-1} e^{-1}}.
\end{align*}
Plugging this into the full expression for $c_M(j)$ leads to some more cancellations and \eqref{eq:thmregM1}.

For the second part, we again fix $x>0$, set $R=\floor{x\sqrt{j}}$ and compute
 \begin{align*}
  \IP( L_j \leq R)
  &= \frac{1}{c_M(j)} \sum_{k=1}^R a(M,k)
   \sim
    2\IP\big(j-R\leq \NB(jr-j,1-r^{-1})\leq j-1\big), \\
  &\sim 2\int_{-\frac{R}{\sqrt{j\frac{r}{r-1}}}}^{-\frac{1}{\sqrt{j\frac{r}{r-1}}}} \frac{1}{\sqrt{2\pi}} e^{-y^2/2} \Di y
   \sim \int_{-x}^x \frac{1}{\sqrt{2\pi \frac{r}{r-1}}} e^{-\frac{y^2}{2\frac{r}{r-1}}} \Di y,
 \end{align*}
 which is the cumulative distribution function of $\left|N\left(0,\frac{r}{r-1}\right)\right|$.
\end{proof}

\section*{Acknowledgements}

The authors thank J. E. Paguyo, Martin Rubey, and Richard Stanley for helpful discussions.

\end{document}